\documentclass[12pt]{amsart}

\usepackage{a4wide}
\usepackage{color}
\usepackage{amsrefs}
\usepackage{hyperref}

\newtheorem{thm}{Theorem}
\newtheorem{lem}{Lemma}
\newtheorem{cor}{Corollary}
\newtheorem{prop}{Proposition}
\newtheorem{exa}{Example}

\title[Hermitian and skew hermitian forms over local rings]{Hermitian and skew hermitian forms over local rings}

\author{James Cruickshank}
\address{School of Mathematics, Statistics and Applied Mathematics, National University of Ireland Galway, Ireland}

\email{james.cruickshank@nuigalway.ie}

\author{Rachel Quinlan}
\address{School of Mathematics, Statistics and Applied Mathematics, National University of Ireland Galway, Ireland}

\email{rachel.quinlan@nuigalway.ie}

\author{Fernando Szechtman}
\address{Department of Mathematics and Statistics, University of Regina, Canada}

\email{fernando.szechtman@gmail.com}

\keywords{local ring, discrete valuation ring, hermitian,  skew hermitian}
\subjclass[2010]{11E08, 11E39, 15A63}

\begin{document}

\maketitle
\begin{abstract}
    We investigate the structure of possibly degenerate \( \varepsilon \)-hermitian forms 
    over local rings. We prove classification theorems in the cases where the ring is complete and either 
    the form is nondegenerate or the ring is a discrete valuation ring. In the latter case we 
    describe a complete set of invariants for such forms based on a generalisation of the classical notion 
    of the radical of the form.
\end{abstract}

\section{Introduction}

We are concerned with the classification problem of possibly degenerate 
hermitian  or skew hermitian bilinear forms over a local ring in which 2 is a unit.
Symmetric and skew symmetric forms are included as a special case, as we allow the underlying involution to be trivial.

In foundational papers of Durfee (\cite{MR0011073}) and O'Meara (\cite{MR0067163})
the integral theory of quadratic forms over local fields is developed.
This is equivalent to the theory of symmetric bilinear forms over a complete local principal ideal domain with finite residue field. 
Jacobowitz (\cite{MR0150128}) extended this integral theory to the case of 
hermitian forms over local fields equipped with an involution. 

More recently, Levchuk and Starikova (\cite{LO}) have 
proved the existence and uniqueness of normal forms 
for symmetric matrices over local principal ideal domains
under certain assumptions on the unit group of the ring.

In a wider setting 
Bayer-Fluckiger and Fainsilber (\cite{MR1374198}) have considered the 
general problem of equivalence of hermitian or skew hermitian forms 
over arbitrary rings and are able to prove some quite general reduction theorems
for this problem.
A broad study of sesquilinear forms and their connection to hermitian 
forms was carried out by Bayer-Fluckiger, First and Modovan (\cite{MR3245846})
as well as by Bayer-Fluckiger and Moldovan (\cite{MR3124208}).

For general information on quadratic and hermitian forms over rings we refer
the reader to the textbook of Knus (\cite{MR1096299}).

\subsection{Outline of the paper}
Given the extensive existing literature on this topic and the fact that we aim to provide a unified and self contained exposition, some overlap with 
previously known results is inevitable. In this section we summarise the results of the paper, emphasising the contributions that we have made.

Throughout, \( A \) is a commutative local ring with maximal ideal
\( \mathfrak r \) and \( * \) is an involution of \( A \), that is 
to say an
automorphism of order at most two.
Clearly, \( * \) induces an involution on the residue field \( A/\mathfrak r \).
If this induced involution is trivial, we say that \( * \) is 
{\em ramified}, otherwise it is {\em unramified} -
this distinction will play a decisive role in the sequel. 
Thus broadly speaking there are four types of forms to consider, ramified 
hermitian, ramified skew hermitian, unramified hermitian and unramified skew hermitian. 
However, the last two can be merged into one case 
since they differ only up to multiplication by a unit.

In Section \ref{sec:smallnondegen} we present some elementary results on nondegenerate
forms of rank one or two in various cases. These essentially form the building 
blocks for our later classification theorems. In light of this we include all the details 
so as to make the paper self contained.

In Section \ref{sec:completelocalrings} we introduce completeness and use it to derive the existence of basis vectors with certain specified properties. A key 
new result here is Lemma \ref{lem:completenessimplessymplecticpairs} which 
guarantees the existence of a symplectic pair under appropriate conditions. We also give an example to show that this lemma can fail in the absence of completeness.

In Section \ref{sec:nondegenforms} we use the results of the previous sections to analyse nondegenerate forms of any rank over arbitrary local rings. 
In this case there is an associated vector space and form over the residue field and in 
the case that the ring is also complete we show that equivalence of forms over the local ring reduces to equivalence of these associated forms over the residue field. 
In particular we demonstrate
the existence of a symplectic basis for a nondegenerate skew hermitian form over a complete local ring with ramified involution - see Proposition \ref{prop:sympbasis}. 

The remaining sections of the paper deal with possibly degenerate forms. 
In this case it is not true without further assumptions that such forms have a nice 
- in a sense precisely defined in Section \ref{sec:DVRs} -
decomposition. However in the case of a discrete valuation ring, that is to say a local principal ideal domain, we are able to prove
that any form has such a nice decomposition - this is the content of Theorem \ref{thm:dvrcanonicaldecomp}. 

Following this we introduce a generalisation of the classical notion of the radical of a form. In our case we have one generalised radical module for each nonnegative integer.
Using these generalised radicals we are able to show that the equivalence problem for a pair of forms over a complete discrete valuation ring can be reduced to 
the equivalence of a series of pairs of nondegenerate forms over the associated residue field - see Theorem \ref{thm:henselforformsoverdvr}. This is a broad generalisation of Theorem D of \cite{MR2555069} which dealt only with the special case of symmetric matrices.

Under some additional hypotheses, these spaces essentially turn out to give rise to a complete set of invariants for hermitian and skew hermitian forms over a complete discrete valuation ring. This can be found in Theorem \ref{lindo}.

Also under these hypotheses, we are able to prove, in Theorems 
\ref{lindo2} and \ref{unra}, the existence and uniqueness of normal forms in all possible cases. As a consequence of these normal forms we also prove that the congruence class of a hermitian or skew hermitian matrix, over a complete local principal ideal domain satisfying the aforementioned additional 
hypotheses, depends only on the invariant factors of the
matrix - this is stated in Theorem \ref{lindo3}.
Results analogous to this last theorem have been obtained elsewhere,
for polynomial rings over algebraically closed fields see Theorem 4.5 of \cite{MR1960118},
and for skew symmetric matrices over principal ideal domains see Exercise 4 of Chapter XIV of \cite{MR783636}.

\subsection{Terminology and notation}
\label{subsec:terminology}

We shall (sometimes) use the shorthand \(
\overline a \) for the residue class \( a + \mathfrak r\).
Moreover, we extend this shorthand to matrices in the obvious way.
We write \(U(A)\) for the
multiplicative group of
units of \(A\) and recall the fundamental fact that, since \(A\)
is local, \( \mathfrak r = A -U(A)\).

Let \(R\) be the subring of \(A\) consisting of elements
fixed by \(*\). Observe that \( R\) is also a local ring,
with maximal ideal \(\mathfrak m = R \cap \mathfrak r\).
Let \( S = \{ a \in A: a^* = -a \}\).
Since \( 2 \) is assumed to be 
a unit of \(A\) we have \( A = R \oplus S\).
If \( S \subset \mathfrak r\) we say that \( *\) is
{\em ramified}. Otherwise we say that it is {\em unramified}.
Since \(\mathfrak r\) is \(*\)-invariant, \(*\) induces an
involution on the residue field \(A/\mathfrak r\), which we
also denote by \(*\).
As indicated in the introduction, \( *\) is ramified if and only if the induced
involution on the residue field is trivial.

We write \( M_m(A) \) for the set of \( m \times m \) matrices over \( A \). The 
group of invertible \( m \times m \) matrices is denoted by \( GL_m(A) \).

Let \(V\) be a free \(A\)-module of rank \(m>0\) and
let \(h:V \times V \rightarrow A\) be an
{\em \( \varepsilon \)-hermitian} form, where \( \varepsilon = \pm 1 \).
That is to say \( h \) is bi-additive, \( A \)-linear 
in the second variable and \( h(v,u) = \varepsilon h(u,v)^*\)
for all \( u,v \in V\).

The {\em Gram matrix} of a list of vectors \(v_1,\cdots, v_k\)
with respect to a form \(h\) is the \( k \times k\) matrix
whose \( (i,j)\)-entry is \(h(v_i,v_j)\).
The form \(h\) is said to be {\em nondegenerate} if the Gram matrix of
any basis of \( V \) belongs to \(GL_m(A)\).

A matrix \(C \in M_m(A)\) is said to be {\em \(\varepsilon\)-hermitian}
if \(C'^* = \varepsilon C\) where \(C'\) denotes the transpose
of \(A\).
Of course \( h\) is
\(\varepsilon\)-hermitian if and only if  \( G\)
is \(\varepsilon\)-hermitian, where \(G\) is the Gram
matrix of any basis. On the other hand, given an \(\varepsilon\)-hermitian
\(C \in M_m(A)\), we can construct an \(\varepsilon\)-hermitian
form \(h_C\) on \( V = A^m\), by \(h_C (u,v) = u'^*Cv\).

Matrices \(C,D \in M_m(A)\) are said to be {\em \(*\)-congruent} if
there exists \(X \in GL_m(A)\) such that \(D = X'^*CX\).
Given an \( \varepsilon \)-hermitian form \(h\), then
the Gram matrices of any two bases are \(*\)-congruent.
On the other hand we say that forms \(h_1\) on \( V_1 \),
respectively   \( h_2\) on \(V_2\) are
{\em equivalent} if there is an \( A \)-isomorphisn
\( \varphi:V_1 \rightarrow V_2 \) such that
\(h_1(u,v) = h_2(\varphi(u),\varphi(v))\). Now, \(h_C\) and \(h_D\) as above
are
equivalent if and only \( C\) and \(D\) are \(*\)-congruent.

\section{Small nondegenerate submodules}
\label{sec:smallnondegen}

In contrast to the field case,
a linearly independent set is not necessarily a subset of
a basis - this is apparent even in the rank one case.
In the presence of the form \(h\), however, we have some
sufficient conditions.

Given an \(A\)-submodule \(W\) of \(V\), let
\( W^\perp = \{ v \in V: h(w,v) = 0 \}\). Then
\(W^\perp\) is an \(A\)-submodule of \(V\).

\begin{lem}
    \label{lem:unitlengthimpliesbasisvector}
    Let \(u \in V\) and suppose that \(h(u,u) \in U(A)\).
    Then \(V =
    Au \oplus Au^\perp\). Moreover, both \(Au\) and
    \(Au^\perp\) are free \(A\)-modules.
\end{lem}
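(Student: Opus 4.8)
The plan is to exhibit the decomposition $V = Au \oplus Au^\perp$ by constructing an explicit projection onto $Au$ along $Au^\perp$. Set $c = h(u,u)$, which is a unit by hypothesis. For an arbitrary $v \in V$, I want to write $v = \lambda u + w$ with $\lambda \in A$ and $w \in u^\perp = (Au)^\perp$. Pairing both sides on the left with $u$ forces $h(u,v) = \lambda h(u,u) = \lambda c$, so the only candidate is $\lambda = c^{-1} h(u,v)$. I would therefore \emph{define} $w = v - c^{-1} h(u,v)\, u$ and verify directly that $h(u,w) = h(u,v) - c^{-1}h(u,v)\, h(u,u) = h(u,v) - c^{-1} h(u,v) c = 0$, using only $A$-linearity of $h$ in the second variable and the fact that $c$ commutes with everything in $A$. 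This shows $w \in Au^\perp$ and gives $V = Au + Au^\perp$.

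\medskip

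Next I would check that the sum is direct. Suppose $x \in Au \cap Au^\perp$, say $x = \mu u$ with $\mu \in A$ and simultaneously $h(u,x) = 0$. Then $0 = h(u,\mu u) = \mu h(u,u) = \mu c$, and since $c$ is a unit this forces $\mu = 0$, hence $x = 0$. Combined with the previous paragraph this establishes $V = Au \oplus Au^\perp$ as $A$-modules.

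\medskip

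Finally I must show that both summands are free. For $Au$ this amounts to showing that $u$ generates a free rank-one module, i.e. that the map $A \to Au$, $a \mapsto au$, is injective; but if $au = 0$ then $0 = h(u,au) = a h(u,u) = ac$, so $a = 0$ since $c$ is a unit. Thus $Au \cong A$ is free of rank one. For $Au^\perp$, I would invoke that $V$ is free of finite rank $m$ and that we have a direct sum decomposition $V = Au \oplus Au^\perp$; since $Au$ is a free direct summand, $Au^\perp$ is a projective $A$-module, and over the local ring $A$ every finitely generated projective module is free (indeed $Au^\perp$ is finitely generated as a direct summand of the finitely generated module $V$). Hence $Au^\perp$ is free, necessarily of rank $m-1$.

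\medskip

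I expect the only genuine obstacle to be the freeness of $Au^\perp$: the direct-sum decomposition itself is a short explicit computation, but concluding freeness requires either the standard fact that finitely generated projectives over a local ring are free, or an alternative hands-on argument. A self-contained alternative would be to observe that $\overline{h}$ is nondegenerate enough at $u$ over the residue field to extend $u$ to a basis $u, e_2, \dots, e_m$ of $V$ by Nakayama's lemma, project the $e_i$ into $u^\perp$ via the formula above to obtain $f_i = e_i - c^{-1}h(u,e_i)\,u$, and check that $u, f_2, \dots, f_m$ is again a basis of $V$ (the change of basis being unitriangular), so that $f_2, \dots, f_m$ is a basis of $Au^\perp$; this avoids citing projective-module theory and keeps the argument elementary and self-contained in the spirit of the paper.
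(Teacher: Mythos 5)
Your proposal is correct, but it reaches the freeness of \(Au^\perp\) by a different route than the paper. You first establish \(V = Au \oplus Au^\perp\) abstractly via the orthogonal projection \(v \mapsto h(u,u)^{-1}h(u,v)\,u\), and then conclude that \(Au^\perp\), being a finitely generated direct summand of the free module \(V\), is projective and hence free over the local ring \(A\). The paper instead does everything by hand: since \(h(u,u)=\sum_i c_i h(u,v_i)\) is a unit and \(A\) is local, some coordinate \(c_i\) of \(u\) in a basis is a unit, so \(u\) can be substituted into that basis; a Gram--Schmidt step \(w_i = v_i - h(u,v_i)h(u,u)^{-1}u\) then produces an explicit basis \(u,w_2,\dots,w_m\) of \(V\) with \(w_2,\dots,w_m\) a basis of \(Au^\perp\), giving the decomposition and both freeness claims simultaneously. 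Your version is shorter and more conceptual but imports the ``finitely generated projective over a local ring implies free'' theorem, whereas the paper's version is entirely elementary and self-contained (which it explicitly aims to be) and yields an explicit basis of the complement. The alternative you sketch in your final paragraph --- extend \(u\) to a basis and project the remaining basis vectors --- is essentially the paper's argument, so you have correctly identified both the one non-elementary ingredient in your main line and how to excise it.
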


\begin{proof}
    Let \(v_1,\cdots,v_m\) be a basis of \(V\).
    So \(u = \sum c_i v_i\). Now \( h(u,u) = \sum
    c_i h(u,v_i) \in U(A)\). Since
    \(A\) is local this implies that \(c_j \in U(A)\)
    for some \(j\). Without loss of generality assume that \(
    c_1 \in U(A)\). Therefore \(u,v_2,\dots,v_m\)
    is a basis of \(V\). Now, for \(i = 2,\cdots m\),
    let \(w_i = v_i - h(u,v_i)h(u,u)^{-1}u\). Clearly
    \(u,w_2,\cdots,w_m\) is a basis of \(V\) and
    \(w_2,\cdots,w_m\) is a basis of \(Au^\perp\).
\end{proof}

In the case that \(*\) is ramified and \(h\) is skew hermitian, it
is immediate that \( h(u,u) \in \mathfrak r\) for all \(u \in V\).
So we cannot have any nondegenerate rank one submodules in that case.
However we have the following lemma concerning rank two submodules.

\begin{lem}
    \label{lem:skewunitproduct}
    Suppose that \( *\) is ramified and that
    \(h\) is skew hermitian. If \(u,v \in V\)
    satisfy
    \( h(u,v) \in U(A) \), then
    \( V = (Au+Av) \oplus (Au+Av)^\perp\). Moreover
    both summands in this decomposition are free
    \(A\)-modules.
\end{lem}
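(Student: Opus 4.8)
The plan is to reduce the whole statement to a single computation: that the Gram matrix
\[ G = \begin{pmatrix} h(u,u) & h(u,v) \\ h(v,u) & h(v,v) \end{pmatrix} \]
of the pair \((u,v)\) lies in \(GL_2(A)\). Once this is known, the argument runs exactly parallel to Lemma~\ref{lem:unitlengthimpliesbasisvector}. To establish invertibility I would compute the determinant. Because \(h\) is skew hermitian we have \(h(u,u)^* = -h(u,u)\) and likewise \(h(v,v)^* = -h(v,v)\), so both diagonal entries lie in \(S\); since \(*\) is ramified we have \(S \subseteq \mathfrak r\), and hence \(h(u,u), h(v,v) \in \mathfrak r\). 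Substituting \(h(v,u) = -h(u,v)^*\) then gives
\[ \det G = h(u,u)h(v,v) - h(u,v)h(v,u) = h(u,u)h(v,v) + h(u,v)h(u,v)^{*}. \]
The first summand lies in \(\mathfrak r\), while the second is a product of two units (as the automorphism \(*\) preserves \(U(A)\)); thus \(\det G\) differs from a unit by an element of \(\mathfrak r\) and is therefore itself a unit. Hence \(G \in GL_2(A)\). In particular \(u,v\) are linearly independent, since a relation \(au+bv=0\) yields \(G\binom{a}{b}=0\) upon pairing with \(u\) and with \(v\); so \(Au+Av\) is free of rank two on \(\{u,v\}\).

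With \(G\) invertible I would obtain the splitting by the projection method of Lemma~\ref{lem:unitlengthimpliesbasisvector}. Reducing modulo \(\mathfrak r\), the reduced matrix \(\overline G\) is invertible over the field \(A/\mathfrak r\), so \(\overline u, \overline v\) are linearly independent there and extend to a basis \(\overline u, \overline v, \overline{w_3}, \dots, \overline{w_m}\) of \(V/\mathfrak r V\); by Nakayama's lemma any lifts give a basis \(u,v,w_3,\dots,w_m\) of \(V\). For \(i \ge 3\) I then set \(w_i' = w_i - c_i u - d_i v\), where \(\binom{c_i}{d_i} = G^{-1}\binom{h(u,w_i)}{h(v,w_i)}\). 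This is precisely the choice forcing \(h(u,w_i') = h(v,w_i') = 0\), that is, \(w_i' \in (Au+Av)^\perp\).

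Since each \(w_i'\) differs from \(w_i\) by an element of \(Au+Av\), the list \(u,v,w_3',\dots,w_m'\) is again a basis of \(V\). It then remains to verify that \(w_3',\dots,w_m'\) is a basis of \((Au+Av)^\perp\): these vectors are independent and lie in the perpendicular module, and for any \(y \in (Au+Av)^\perp\), expanding \(y\) in this basis and applying \(h(u,-)\) and \(h(v,-)\) forces \(G\) times its \((u,v)\)-coordinate vector to vanish, whence those coordinates are \(0\) by invertibility of \(G\). This yields \(V = (Au+Av) \oplus (Au+Av)^\perp\) with both summands free. I expect the main content to be the determinant computation of the first paragraph, as this is the only place the hypotheses genuinely enter: it is precisely ramifiedness together with skew hermiticity that pushes the diagonal of \(G\) into \(\mathfrak r\), so that a single unit off-diagonal entry already makes \(G\) invertible. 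The only real module-theoretic subtlety beyond this is the lifting of the independent pair \(u,v\) to a full basis of \(V\), which is handled by Nakayama's lemma.
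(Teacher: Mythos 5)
Your proof is correct and follows essentially the same route as the paper: the decisive point in both is that ramifiedness plus skew-hermiticity force the diagonal of the Gram matrix of \((u,v)\) into \(\mathfrak r\), so the unit off-diagonal entry makes it invertible, after which the remaining basis vectors are projected onto \((Au+Av)^\perp\). The only difference is cosmetic: you extend \(u,v\) to a basis of \(V\) by reducing modulo \(\mathfrak r\) and invoking Nakayama, whereas the paper uses a direct exchange argument on an existing basis, and your explicit determinant computation spells out the invertibility of the Gram matrix that the paper merely asserts.
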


\begin{proof}
    Let \(v_1,\cdots,v_m\) be a basis of \(V\). So \(u = \sum_i a_iv_i\).
    Suppose that \(a_i \in \mathfrak r\) for
    all \(i\). Then clearly \( h(u,v) \in \mathfrak r\), contradicting
    our assumption. So without loss of generality we may assume that \(a_1 \in U(A)\).
    It follows that \(u,v_2,\cdots,v_n\) is a basis of \(V\).
    Write \( v = b_1u + \sum_{j=2}^m b_jv_j\). If \(b_2,\cdots,b_n \in
    \mathfrak r\), then since \( h(u,u) \in \mathfrak r\)
    it would follow that \( h(u,v) \in \mathfrak r\), contradicting our
    hypothesis. Without loss of generality we may assume that \(b_2 \in U(A)\) and it
    follows that \(u,v,v_3,\cdots,v_m\) is a basis of \(V\).
    Since the Gram matrix of \(u,v\) is invertible in this case
    it follows that, for \(i = 3 ,\cdots,m\), there exist
    (unique) \(a_i,b_i \in A\) such that
    \(h(u,a_iu+b_iv) = h(u,v_i)\) and \( h(v,a_iu+b_iv) = h(v,v_i)\).
    Let \(w_i = v_i - (a_iu+b_iv)\). Then \(u,v,w_3,\cdots,w_m\)
    is a basis of \(V\) and \(w_2,\cdots,w_3\) is a basis of
    \( (Au+Av)^\perp\) as required.
\end{proof}

Now we show, excepting the ramified skew hermitian case, that if
\(h\) has any unit value, then there is some element \(w \in V\)
such that \(h(w,w)\) is a unit.

\begin{lem}
    \label{lem:unitproductimpliesunitlength}
    Let $u,v\in V$.
    Suppose that \(h(u,v) \in U(A)\) and that either
    \( h\) is hermitian or that \( *\) is unramified.
    Then there is some \( w \in V\) such that \(h(w,w)
    \in U(A)\).
\end{lem}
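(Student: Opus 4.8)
The plan is to exploit the unit value $h(u,v)$ to manufacture a vector of unit length, by considering suitable $A$-linear combinations of $u$ and $v$ and showing that at least one such combination has a product with itself that is a unit. The natural candidates to examine are $u$, $v$, and $u+v$ (or more generally $u + tv$ for a unit $t$), and I would argue that the assumptions rule out the possibility that all of these simultaneously have non-unit length.

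**First I would** dispose of the easy case: if either $h(u,u)$ or $h(v,v)$ is already a unit, we are done by taking $w=u$ or $w=v$. So I may assume $h(u,u), h(v,v) \in \mathfrak r$. The strategy is then to compute $h(u+v,u+v)$ and show it is a unit. Expanding bilinearity gives
\[
h(u+v,u+v) = h(u,u) + h(u,v) + h(v,u) + h(v,v).
\]
Since $h(u,u)$ and $h(v,v)$ lie in $\mathfrak r$, the question reduces to whether $h(u,v) + h(v,u)$ is a unit. Using the $\varepsilon$-hermitian symmetry, $h(v,u) = \varepsilon h(u,v)^*$, so the middle term equals $h(u,v) + \varepsilon h(u,v)^*$.

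**The heart of the argument** is to show, under the stated hypotheses (either $h$ hermitian, i.e. $\varepsilon = 1$, or $*$ unramified), that $a + \varepsilon a^*$ is a unit whenever $a = h(u,v)$ is a unit. In the hermitian case $\varepsilon = 1$, this is $a + a^* = 2\,\mathrm{Re}(a)$ in the sense of the $R$-component; since $a$ is a unit its residue $\overline a$ is nonzero, and I would check that $\overline a + \overline a^{\,*} \neq 0$ in the residue field, using that $2$ is a unit. If this trace happens to land in $\mathfrak r$, then $u+v$ fails, and I would instead pass to $u + tv$ for a suitable unit $t$ and show that some choice makes the trace a unit — this case analysis is where the hypotheses must be invoked most carefully. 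In the unramified case the involution on the residue field is nontrivial, which gives extra room to separate $\overline a$ from $-\overline a^{\,*}$.

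**The main obstacle** I anticipate is the hermitian case when $h(u,v)$ is a unit whose trace $h(u,v)+h(u,v)^*$ happens to lie in $\mathfrak r$; here the single combination $u+v$ does not suffice and one must genuinely use a family $u + tv$ and argue that the trace cannot vanish modulo $\mathfrak r$ for all units $t$ — this reduces to a statement about the induced involution on the residue field and the fact that $2$ is invertible there. The unramified skew-hermitian subcase should be more forgiving, since nontriviality of the residue involution prevents the trace from degenerating, but I would still need to verify that the relevant residue-field computation produces a nonzero (hence unit) value.
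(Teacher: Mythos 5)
Your overall strategy --- examine $h(u+tv,u+tv)$ and show that the cross-term forces a unit --- is exactly the paper's, but you stop at the one point where the argument has real content, namely your ``main obstacle.'' The resolution is a single normalization that you never commit to: replace $v$ by $h(u,v)^{-1}v$ at the outset, so that $h(u,v)=1$. In the hermitian case the cross-term is then $h(u,v)+h(v,u)=1+1^*=2$, and the identity
\[
2 \;=\; h(u+v,u+v)-h(u,u)-h(v,v),
\]
together with $2\in U(A)$ and the fact that a sum of non-units in a local ring is a non-unit, forces one of $h(u,u)$, $h(v,v)$, $h(u+v,u+v)$ to be a unit. (This is also cleaner than your version: there is no need to first dispose of the case $h(u,u)\in U(A)$ or $h(v,v)\in U(A)$; locality does all the work in one line.) Your worry that $a+a^*$ might lie in $\mathfrak r$ for a unit $a$ is legitimate --- it can happen --- but it is precisely what the choice $t=h(u,v)^{-1}$ eliminates, since $1+1^*=2$ is always a unit. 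Without exhibiting that choice, your claim that ``the trace cannot vanish modulo $\mathfrak r$ for all units $t$'' is left unproved, so the proof as written is incomplete at its central step.

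For the unramified skew-hermitian case your heuristic (``nontriviality of the residue involution prevents the trace from degenerating'') is not only vague but misleading: with the normalization $h(u,v)=1$ the cross-term becomes $1-1^*=0$, so that particular $t$ fails outright when $\varepsilon=-1$. The paper's fix is to pick $b\in U(A)\cap S$ (possible exactly because $*$ is unramified) and observe that $bh$ is hermitian with $bh(u,v)\in U(A)$, reducing to the case already settled. Equivalently, in your language, one must take $t=h(u,v)^{-1}b$, so that the cross-term is $b-b^*=2b\in U(A)$. Either way, a concrete witness must be produced; the residue-field ``extra room'' does not by itself close the argument.
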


\begin{proof}
    Replacing \( v \) by \(h(u,v)^{-1}v\) if necessary,
    we can assume without loss of generality that
    \(h(u,v) = 1\). Now suppose that \(h\) is hermitian.
    Then
    \[2 = h(u,v) +h(v,u) = h(u+v,u+v)-h(u,u)-h(v,v). \]
    Now since \(2 \in U(A)\) and \(A\) is local, we
    conclude that at least one of \(h(u,u)\),
    \(h(v,v)\) or \(h(u+v,u+v)\) belongs to \(U(A)\).
    Finally in the case where \(h\) is skew hermitian
    and \( *\) is unramified, choose \(b \in U(A)
    \cap S\). Now observe that \( bh\) is
    hermitian and that \( bh(u,v) \in U(A) \)
    if and only if \(h(u,v) \in U(A)\).
\end{proof}

In summary the results of this section show that if \(h\) has any
unit value then it is possible to break off a nondegenerate
submodule of rank one except in the ramified skew hermitian case.
In the latter case, no nondegenerate rank one submodules can possibly exist,
but it is possible to break off a nondegenerate
submodule of rank two.

\section{Complete local rings}
\label{sec:completelocalrings}

Having given conditions sufficient to ensure the existence
of nondegenerate rank one and two submodules,
in this section we introduce some natural conditions
on the ring that will guarantee the existence of unit length
basis vectors, or in the ramified skew hermitian case, the existence of a
symplectic pair. Recall that a symplectic pair is an
ordered pair of vectors \( (u,v) \in V^2\) satisfying
\( h(u,u) = h(v,v) = 0\) and \( h(u,v) = -h(v,u) = 1\).
We also recall that a symplectic basis is a basis \(
v_1,w_1,v_2,w_2,\cdots,v_l,w_l\) such that each pair \( (v_i,w_i)\) is
a symplectic pair and such that \( h(v_i,v_j) = h(w_i,w_j) = h(v_i,w_j) = h(w_j,v_i) = 0\) for \(i \neq j\).

Observe that \(h(au,au) = a^*ah(u,u)\), so it
is natural to investigate the image of the so-called
norm map, \( N: a \mapsto a^*a\).
To this end, we follow Cohen (\cite{C}) and say that $A$ is {\em complete} if \(\bigcap_{i=1}^\infty \mathfrak r^i = 0\)
and $A$ is metrically complete with respect to its \(\mathfrak r\)-adic
metric. We observe that
in the present context, where we seek solutions to certain quadratic equations,
it is natural to restrict our attention to complete rings.

\begin{lem}
    \label{lem:henselwithinvolution}
    Suppose that \( A \) is complete. Suppose that
    \(b \in U(R)\) and that
    \( a^*a \equiv b \mod \mathfrak r\) for some $a\in A$. Then there is some
    \(c \in A\) such that \( c^*c = b\).
\end{lem}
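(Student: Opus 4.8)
The plan is to reduce the norm equation $c^*c=b$ to extracting a square root in $R$, and then to solve that by a quadratically convergent (Newton) iteration, which converges because $A$ is complete and $2\in U(A)$. First I would observe that $N(a)=a^*a$ is fixed by $*$, hence lies in $R$, and that $N(a)-b\in R\cap\mathfrak r=\mathfrak m$; since $b\in U(R)$ this gives $N(a)\in U(R)$ and $N(a)\equiv b\bmod\mathfrak m$. Looking for $c$ of the form $c=az$ with $z\in R$, commutativity of $A$ together with $z^*=z$ yields $c^*c=z^2N(a)$. Thus it suffices to find $z\in R$ with $z^2=w$, where $w:=b\,N(a)^{-1}\in U(R)$ satisfies $w\equiv 1\bmod\mathfrak m$.

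To produce the square root I would iterate $z_0=1$ and $z_{n+1}=\tfrac12\bigl(z_n+w z_n^{-1}\bigr)$. Here $\tfrac12\in R$ because $2^*=2$ and $2\in U(A)$ force $2^{-1}\in R$, and an easy induction shows that each $z_n$ is a unit of $R$ with $z_n\equiv 1\bmod\mathfrak m$, so the whole sequence stays in $R$. The identity $z_{n+1}^2-w=\tfrac14 z_n^{-2}(z_n^2-w)^2$ shows that the error is squared at each step: from $z_0^2-w=1-w\in\mathfrak m$ one obtains $z_n^2-w\in\mathfrak r^{2^n}$, whence $z_{n+1}-z_n=-\tfrac12 z_n^{-1}(z_n^2-w)\in\mathfrak r^{2^n}$, so the sequence is $\mathfrak r$-adically Cauchy. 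Completeness of $A$ then provides a limit $z$ with $z^2=w$, and $c:=az$ satisfies $c^*c=z^2N(a)=wN(a)=b$, as desired.

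The substantive point, and the step I expect to require the most care, is the convergence of this iteration — in effect Hensel's lemma for $T^2-w$ at the approximate root $1$, where the hypothesis that $2$ is a unit is exactly what makes the relevant derivative invertible. It also remains to confirm that the limit $z$ lies in $R$ and not merely in $A$. Since $*$ preserves every $\mathfrak r^i$ it is $\mathfrak r$-adically continuous, so $R=\{a:a^*=a\}$ is closed in $A$, and the limit of the $z_n\in R$ therefore stays in $R$. Alternatively one can sidestep closedness by uniqueness: $z^*$ also squares to $w^*=w$ and reduces to $1$ modulo $\mathfrak r$, so $z^*=z$ by the uniqueness clause of Hensel's lemma, giving $z\in R$ directly.
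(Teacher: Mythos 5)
Your proof is correct and follows essentially the same route as the paper: both reduce the problem to extracting a square root of the unit $b(a^*a)^{-1}\in 1+\mathfrak m$ inside $R$ and then set $c = a\cdot(\text{that root})$. The only difference is that the paper simply cites Hensel's Lemma (Cohen, Theorem 4) for the surjectivity of squaring on $1+\mathfrak m$, whereas you prove that step from scratch via the Newton iteration, which makes the argument self-contained but adds no new idea.
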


\begin{proof}
    First we observe that since \( A\) is complete,
    \(R\) is also complete. Since $2\in R$,  it follows from 
    Hensel's Lemma (see \cite{C}, Theorem 4) that
    the squaring map from \( 1 + \mathfrak m \) to \( 1 + \mathfrak m\)
    is a surjection.
    Now \( a \in U(A)\) since \( b \in U(A)\), therefore \( b(a^*a)^{-1}
    \in 1 + \mathfrak m\). Hence there is some
    \( \delta \in \mathfrak m \) such that \( b(a^*a)^{-1}
    = (1+\delta)^2\). So \(b = (a(1+\delta))^*a(1+\delta)\).
\end{proof}

\begin{cor}
    \label{cor:henselinV}
    Suppose that \(A\) is complete and that $u\in V$, \( b \in U(A)\) 
    and \( b^* = \varepsilon b\).
    If \( h(u,u) \equiv b \mod \mathfrak r \) then there is some \( w \in
    Au\) such that \( h(w,w) = b\).
\end{cor}

\begin{proof}
    Note that \( h(u,u)^{-1}b \in 1 +\mathfrak m \).
    By Lemma \ref{lem:henselwithinvolution} there is some \( c \in A\)
    such that \( c^*c = h(u,u)^{-1}b\). Let \( w = cu \).
\end{proof}

Now we turn to the nondegenerate two dimensional submodules in
the case where \( *\) is ramified and \(h\) is skew hermitian.

\begin{lem}
    \label{lem:completenessimplessymplecticpairs}
    Suppose that \(A\) is complete and that
    \( * \) is ramified. Suppose also that
    \(h\) is skew hermitian and that \(h(u,v) \in
    U(A)\). Then there is some symplectic pair \( (u',v') \in V^2\) such that
    \(Au'+Av' = Au+Av \).
\end{lem}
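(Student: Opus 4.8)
The plan is to cut everything down to the rank-two summand \(W = Au + Av\) and then to normalise its Gram matrix to the standard symplectic matrix \(\left(\begin{smallmatrix} 0 & 1 \\ -1 & 0 \end{smallmatrix}\right)\) by a short sequence of basis changes of \(W\). Each such change is an invertible transformation of \(W\), so it automatically preserves \(Au + Av\); the only input beyond elementary computation will be completeness, which enters at exactly one point.

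First I would apply Lemma \ref{lem:skewunitproduct}: since \(h(u,v)\) is a unit, \(W = Au+Av\) is a free direct summand of rank two with \((u,v)\) as a basis. Writing \(\alpha = h(u,u)\), \(\gamma = h(v,v)\) and \(\beta = h(u,v)\), skew-hermiticity gives the Gram matrix \(\left(\begin{smallmatrix}\alpha & \beta\\ -\beta^* & \gamma\end{smallmatrix}\right)\); because \(*\) is ramified and \(h\) is skew hermitian we have \(\alpha,\gamma \in S \subseteq \mathfrak r\), while \(\beta \in U(A)\). Replacing \(v\) by \(\beta^{-1}v\) makes the \((1,2)\) entry equal to \(1\) and the \((2,1)\) entry equal to \(-1\), and leaves \(h(v,v)\) in \(S\cap\mathfrak r\) (one checks \((\beta^*)^{-1}\gamma\beta^{-1}\in S\)). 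So I may assume \(h(u,v)=1\), \(h(v,u)=-1\), with \(\alpha,\gamma\in S\cap\mathfrak r\).

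The crux is to kill one diagonal entry, say \(h(u,u)\). Taking \(u' = u + sv\) with \(s\in S\) one computes \(h(u',u') = \alpha + 2s - s^2\gamma\), so I must solve \(\gamma s^2 - 2s - \alpha = 0\) for \(s \in S\). I would rewrite this as the fixed-point equation \(s = \tfrac12(-\alpha + \gamma s^2) =: \phi(s)\) and note that \(\phi\) maps \(S\) into \(S\) (using \(S\cdot S \subseteq R\), \(R \cdot S \subseteq S\) and \(2 \in U(R)\)) and is a contraction for the \(\mathfrak r\)-adic metric, since \(\phi(s)-\phi(s')=\tfrac{\gamma}{2}(s+s')(s-s')\) with \(\gamma \in \mathfrak r\). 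As \(A\) is complete and \(S\) is closed, the successive approximations starting from \(-\alpha/2\) converge to a solution \(s \in S\). (Equivalently, Hensel's Lemma yields a root in \(\mathfrak r\) because \(f'(0)=-2\) is a unit; applying \(*\) shows \(-s^*\) is also a root, so uniqueness forces \(s=-s^*\in S\).) This is the single step that genuinely needs completeness, and it is the main obstacle: without it the quadratic may have no solution at all, which is precisely the mechanism behind the promised counterexample in the incomplete case.

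After replacing \(u\) by this \(u'\) we have \(h(u,u)=0\) and \(h(u,v)=1-s\gamma \in 1+\mathfrak r \subseteq U(A)\); rescaling \(v\) restores \(h(u,v)=1\) while leaving \(h(u,u)=0\) untouched and keeping \(h(v,v)\in S\cap\mathfrak r\). Now the remaining diagonal entry can be removed \emph{linearly}: with \(v' = v + pu\) and \(p \in S\) one gets \(h(v',v') = h(v,v) - 2p\), the quadratic term \(N(p)h(u,u)\) having vanished because \(h(u,u)=0\); hence \(p = \tfrac12 h(v,v) \in S\) gives \(h(v',v')=0\), while \(h(u,v') = h(u,v) + p\,h(u,u) = 1\) and \(h(v',u)=-1\). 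Thus \((u,v')\) is a symplectic pair, and since every modification was an invertible change of basis of \(W\) we conclude \(Au + Av' = Au + Av\), as required.
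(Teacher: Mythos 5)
Your proof is correct, and its overall shape matches the paper's: normalise \(h(u,v)=1\), make \(u\) isotropic by adding a multiple of \(v\) (the one step that needs completeness), then fix up \(v\) by a rescaling and a linear shift. The genuine difference is in how the quadratic obstruction is handled. The paper allows an arbitrary coefficient \(b\in A\), which leads to the ``twisted'' equation \(b^*bh(v,v)+(b-b^*)+h(u,u)=0\); this is not a polynomial equation in one commuting variable, so the paper proves a bespoke result (Lemma \ref{lem:quadraticequationhasroot}) by an explicit iterated completing-the-square scheme. You instead restrict the perturbation to \(s\in S\) from the outset, which collapses the equation to the honest one-variable quadratic \(\gamma s^2-2s-\alpha=0\) with unit linear coefficient; this is amenable to the standard contraction-mapping/Hensel argument, and your observation that \(s\mapsto -s^*\) permutes the roots (so the unique root in \(\mathfrak r\) lies in \(S\)) cleanly justifies the restriction. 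Your route is arguably more economical, replacing an ad hoc lemma by a citation of Hensel's Lemma plus closedness of \(S\); the paper's version is more general in that it solves the equation without assuming the solution lies in \(S\), which is the form in which the obstruction naturally arises if one does not think to symmetrise. All the supporting computations you give (\(\phi(S)\subseteq S\), the contraction estimate via \(\gamma\in\mathfrak r\), \(h(u',v)\in 1+\mathfrak r\), and the final linear correction of \(v\)) check out.
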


\begin{proof}
    By replacing \(v\) by \(h(u,v)^{-1}v\) we can assume that
    \(h(u,v) = 1\). Now observe
    that for \( b \in A\), \(h(u+bv,u+bv) = 0\)
    if and only if
\begin{equation}\label{eq:quadone} b^*bh(v,v) +(b-b^*) + h(u,u) = 0\end{equation}
    Since both \(h(v,v) \) and \(h(u,u) \) belong to \(S\)
    we can apply Lemma \ref{lem:quadraticequationhasroot} (below) to conclude that Equation
    (\ref{eq:quadone}) has a solution in \( \mathfrak r \). So assume that \(b \in \mathfrak r\) satisfies
    (\ref{eq:quadone}) and let \(u' = u+bv\). Clearly, \(Au+Av = Au'+Av\).
    Moreover, \(h(u',v) = 1+b^*h(v,v) \in U(A)\). Now, let
    \(v' = h(u',v)^{-1}(v +\frac12 h(v,v)(h(u',v)^*)^{-1} u')\).
    A straightforward calculation, using \(h(u',u') =0\),
    demonstrates that \( (u',v') \) is
    a symplectic pair. Moreover, since \(h(u',v)\) is a unit,
    it is clear that \(Au'+Av' = Au'+Av = Au+Av\).
\end{proof}

To complete the proof of Lemma \ref{lem:completenessimplessymplecticpairs}
we need the following.

\begin{lem}
    \label{lem:quadraticequationhasroot}
    Suppose that \(A\) is complete and that \( *\)
    is ramified.
    Given \(\alpha, \beta, \gamma \in A\) satisfying \(\alpha,\gamma \in S\)
    and \(\beta \in U(A)\),
    there is some \( t \in A\gamma\) such that
    \[ \alpha t^*t + \beta t-t^*\beta^* +\gamma = 0.\]
\end{lem}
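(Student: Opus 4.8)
The plan is to solve the equation by a successive-approximation scheme adapted to the \( \mathfrak r \)-adic topology, exploiting the fact that ramification forces both \( \alpha \) and \( \gamma \) to lie in \( \mathfrak r \). First I would record two preliminary facts. Since \( * \) is ramified we have \( S \subseteq \mathfrak r \), so \( \alpha, \gamma \in \mathfrak r \). Writing \( \beta_R = \tfrac12(\beta+\beta^*) \in R \), the difference \( \beta - \beta_R = \tfrac12(\beta - \beta^*) \) lies in \( S \subseteq \mathfrak r \), so \( \beta_R \equiv \beta \pmod{\mathfrak r} \); as \( \beta \in U(A) \) this gives \( \beta_R \in U(R) \) and hence \( 2\beta_R \in U(A) \). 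The decisive linear identity is that for \( t \in S \) one has \( \beta t - t^*\beta^* = t(\beta+\beta^*) = 2\beta_R t \), so that the troublesome linear part of the equation becomes multiplication by the unit \( 2\beta_R \) once we restrict attention to \( S \).

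Guided by this, I would define a sequence by \( t_0 = 0 \) and \( t_{n+1} = (2\beta_R)^{-1}(-\alpha t_n^* t_n - \gamma) \), and establish three properties by induction. (i) Each \( t_n \in S \): the bracketed expression is fixed up to sign by \( * \) because \( \alpha, \gamma \in S \) and \( t_n^* t_n \in R \), and \( R \cdot S \subseteq S \); in particular \( t_n \in \mathfrak r \). (ii) Each \( t_n \in A\gamma \): setting \( t_n = c_n\gamma \) and using \( t_n^* t_n = -c_n^* c_n \gamma^2 \), one checks that \( t_{n+1} = c_{n+1}\gamma \), where \( c_0 = 0 \) and
\[ c_{n+1} = (2\beta_R)^{-1}(\alpha\gamma\, c_n^* c_n - 1). \]
(iii) The coefficient sequence is \( \mathfrak r \)-adically Cauchy: from \( c_{n+1} - c_n = (2\beta_R)^{-1}\alpha\gamma(c_n^* c_n - c_{n-1}^* c_{n-1}) \) and \( \alpha\gamma \in \mathfrak r^2 \), a short induction gives \( c_{n+1} - c_n \in \mathfrak r^{2n} \).

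By completeness the Cauchy sequence \( (c_n) \) converges to some \( c \in A \). Since multiplication and \( * \) are \( \mathfrak r \)-adically continuous — the latter because every \( \mathfrak r^i \) is \( * \)-invariant — passing to the limit in the recursion yields \( 2\beta_R c = \alpha\gamma\, c^* c - 1 \). I would then set \( t = c\gamma \), which lies in \( A\gamma \) by construction. Because \( t = \lim t_n \) with each \( t_n \in S \) and \( S \) is closed (being the kernel of the continuous map \( a \mapsto a + a^* \)), we get \( t \in S \), so the linear identity applies and \( \beta t - t^*\beta^* = 2\beta_R t \). Multiplying the limit relation by \( \gamma \) gives \( 2\beta_R t = \alpha c^* c\gamma^2 - \gamma = -\alpha t^* t - \gamma \), and rearranging produces \( \alpha t^* t + \beta t - t^*\beta^* + \gamma = 0 \), as required.

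The main obstacle is to converge to a solution that actually lies in \( A\gamma \), rather than merely in \( A \): iterating in \( t_n \) alone controls convergence but does not obviously keep the limit in \( A\gamma \), since \( A\gamma \) need not be \( \mathfrak r \)-adically closed without a Noetherian hypothesis. Tracking instead the coefficient sequence \( (c_n) \) circumvents this, and the quadratic gain \( c_{n+1}-c_n \in \mathfrak r^{2n} \) that drives convergence is precisely where ramification is used, through \( \alpha\gamma \in \mathfrak r^2 \). A minor point to verify is that the two recursions define the same elements \( t_n \), so that the \( S \)-membership needed for the final identity and the \( A\gamma \)-membership needed for the conclusion hold simultaneously.
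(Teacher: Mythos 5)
Your proof is correct, and while it shares the paper's overall strategy of successive approximation in the $\mathfrak r$-adic topology, the mechanism is genuinely different. The paper iteratively completes the square: it generates sequences $\beta_i,\gamma_i$ with $\gamma_{i+1}=-\alpha\gamma_i^2/(4\beta_i^*\beta_i)$ and $\beta_{i+1}=\beta_i+\alpha\gamma_i/(2\beta_i^*)$, shows the substitution $t\mapsto t-\gamma_i/(2\beta_i)$ carries $f_i$ to $f_{i+1}$, and takes $t=-\tfrac12\sum_{i\geq 1}\gamma_i/\beta_i$ as the solution; the convergence there is quadratic ($\gamma_i$ drops into $\mathfrak r^{2^{i-1}}$-type powers). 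You instead first linearise the skew part via the identity $\beta t-t^*\beta^*=(\beta+\beta^*)t=2\beta_R t$ valid for $t\in S$, which turns the problem into a genuine fixed-point equation $t=(2\beta_R)^{-1}(-\alpha t^*t-\gamma)$, and then iterate; your contraction estimate $c_{n+1}-c_n\in\mathfrak r^{2n}$ is only geometric with ratio $\mathfrak r^2$, but that is ample for convergence. Two things your version buys: the restriction to $S$ makes the skewness bookkeeping transparent (the paper instead has to verify inductively that each $\gamma_i$ is skew so that the completing-the-square identity holds), and by iterating on the coefficient $c_n$ in $t_n=c_n\gamma$ you explicitly secure the conclusion $t\in A\gamma$, a point the paper's proof leaves implicit (its summands $\gamma_i/\beta_i$ do lie in $A\gamma$, but one must still factor out $\gamma$ before passing to the limit, exactly as you do). Your closing observations — that $S$ is closed as the kernel of the continuous map $a\mapsto a+a^*$, and that $\alpha\gamma\in\mathfrak r^2$ is where ramification enters — are the right ones; the argument is complete as written.
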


\begin{proof}
We define sequences \((\beta_i)\) and \((\gamma_i)\) as follows. Let
\(\gamma_1 = \gamma\) and \(\beta_1 = \beta\).
Given \(\beta_i\) and \(\gamma_i\), let
\[ \gamma_{i+1} = -\frac{\alpha\gamma_i^2}{4\beta_i^*\beta_i},\]
\[ \beta_{i+1} = \beta_i+\frac{\alpha\gamma_i}{2\beta_i^*}.\]

Observe that \(\beta_{i+1}\) is a unit since \(\beta_{i+1} \equiv \beta_i \mod \mathfrak r\).
Also \( \gamma_{i+1} \) is skew hermitian, since \(\alpha\) and \( \gamma_i\) (inductively)
are both skew hermitian.
Now we check that \( -\frac12 \sum_{i=1}^\infty \gamma_i/\beta_i \) is the required solution.
Define \(f_i(t) = \alpha t^*t +\beta_it-t^*\beta_i^* +\gamma_i\).
Using the fact that \(\gamma_i\) is skew hermitian, an easy calculation  shows that
\(f_i(t-\gamma_i/2\beta_i) = f_{i+1}(t)\). Therefore \(
f_{k+1}(0) = f_{k}(-\gamma_k /2\beta_k) = f_{k-1}
(-\gamma_k/2\beta_k-\gamma_{k-1}/2\beta_{k-1}) = \cdots = f_1(-
\frac12\sum_{i=1}^k\gamma_i/\beta_i)\).
Now \(f_{k+1}(0) = \gamma_k\). Since \(\gamma_1 \in \mathfrak r\),
it is clear that \(\gamma_k \in \mathfrak r^k\) for all \(k\).
Therefore \(f_1( -\frac12 \sum_{i=1}^k \gamma_i/\beta_i)
\in \mathfrak r^k\) as required.
\end{proof}

The following example shows that if \( A \) is not complete it may be
possible to find a nondegenerate rank two skew hermitian submodule
that does not have a symplectic basis.
\begin{exa}
    \label{exa:Rachel}
    {\rm
        Fix an odd prime $p$ such that $p+1$ is not the sum of two squares
        (e.g. $p=5$) and let \( A \) be the extension of \( R=\mathbb Z_{(p)} \) obtained
        by adjoining a square root of~\( p \). So \( A = R\oplus R\sqrt{p} \)
        is a local principal ideal domain with maximal ideal \( A\sqrt{p} \).
        Let \( * \) be the involution of \( A \) that fixes elements of \( R \)
        and maps \( \sqrt{p} \) to \( -\sqrt{p} \) and consider the
        nondegenerate
        skew hermitian form \( h_M \) where \( M = \begin{pmatrix}
        \sqrt{p} & 1 \\ -1 & \sqrt{p} \end{pmatrix}\). We claim that
        there is no isotropic basis vector of \( A^2 \) .
        Suppose that \( \begin{pmatrix}1\\ a +b\sqrt{p}\end{pmatrix} \)
        was such an vector (a similar argument
        applies to the case \( \begin{pmatrix}a+b\sqrt{p} \\1 \end{pmatrix}
        \)). A straightforward calculation shows that
        we must have \( p b^2-2b-1=a^2 \). Writing \( b = \frac{c}{d} \)
        for integers \( c \) and \(d \),
        we see that \( pc^2-2cd-d^2 \)
        must be the square of an integer. But \(pc^2-2cd -d^2 = (p+1)c^2 - (d+c)^2 \),
        so \((p+1)c^2\) is the sum of two squares. By assumption, $p+1$ is not the sum of two squares, so neither is
        \( (p+1)c^2 \) for any integer \( c \).
    }
\end{exa}

\section{Nondegenerate forms}
\label{sec:nondegenforms}

We set $V(-1)=0$ and
\[ V(i) = \{ v \in V: h(V,v) \subset \mathfrak r^i\},\quad i\geq 0. \]
Observe that \( V(i)\) is an \(A\)-submodule of \(V\), \(V(i+1)
\subset V(i) \) and \(\mathfrak r V(i-1)
\subset V(i) \) for all \(i \geq 0\). We note in passing that, if \( \bigcap_{i\geq 0} \mathfrak r^i=0 \) then \( \bigcap_{i\geq 0} V(i) =\{v\in V\,|\, h(v,V)=0\}\) is the {\em radical} of \(
h\). Now, since \(\mathfrak r V(i) \subset V(i+1)\), we see that
$$
W(i)=V(i)/(\mathfrak r V(i-1)+V(i+1))
$$
inherits a \(A/\mathfrak r\)-vector space structure. The form \(h\) induces
a map  $$h_i : W(i)\times W(i) \rightarrow \mathfrak r^i/\mathfrak r^{i+1}.$$
In the special case \(i=0\), we see that \(h_0\) is an \(\varepsilon\)-hermitian
form \(V/V(1) \times V/V(1) \rightarrow A/\mathfrak r\). Observe that $h$ is nondegenerate
    if and only if $h_0$ is nondegenerate.

\begin{lem}\label{lem:ri} Suppose that \(h\) is nondegenerate. Then $V(i)=\mathfrak r^i V$ for all $i\geq 0$.
\end{lem}

\begin{proof} The inclusion $\mathfrak r^i V\subset V(i)$ is clear. For the reverse inclusion, let $v\in V(i)$ and let $\{v_1,\dots,v_m\}$ be a basis of $V$, so that $v=a_1v_1+\cdots+a_mv_m$ for some $a_i\in A$. Since $h$ is nondegenerate, given any $1\leq j\leq m$, there
is $u\in V$ such that $h(u,v_j)=1$ and $h(u,v_k)=0$ for all $k\neq j$. It follows that $h(u,v)=a_j$, whence $a_j\in \mathfrak r^i$ and a fortiori $v\in \mathfrak r^i V$.
\end{proof}

\begin{thm}
    \label{thm:canonicalformnondegenerate}
    Suppose that \(h\) is nondegenerate.
    \begin{enumerate}
        \item If \( * \) is unramified or if \(h\)
            is hermitian then \(V\) has
            a basis whose Gram matrix is diagonal.
        \item If \( *\) is ramified and \( h \) is
            skew hermitian then \(V\) has a basis whose
            Gram matrix is the direct sum of
            \( m/2 \) matrices, each of
            which is an invertible \( 2 \times 2 \)
            skew hermitian matrix.
    \end{enumerate}
\end{thm}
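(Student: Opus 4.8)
The plan is to prove both parts by induction on the rank $m$, in each case splitting off a small nondegenerate orthogonal summand and applying the inductive hypothesis to its complement. The mechanism that makes the induction run is an elementary observation relating nondegeneracy to orthogonal decompositions: if $V = U \oplus U^\perp$ is an orthogonal decomposition with respect to $h$, then a basis of $V$ adapted to this decomposition has block diagonal Gram matrix $\mathrm{diag}(G_1,G_2)$, and since $\det \mathrm{diag}(G_1,G_2) = \det G_1 \det G_2$, while a product of two elements of $A$ is a unit exactly when both factors are units, $h$ is nondegenerate if and only if both $h|_U$ and $h|_{U^\perp}$ are nondegenerate. In particular, once a nondegenerate summand is split off from a nondegenerate $V$, the complement is automatically nondegenerate and the descent continues. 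To initiate either case I first locate a unit value of $h$: since $h$ is nondegenerate the Gram matrix $G$ of any basis lies in $GL_m(A)$, so its reduction $\overline G$ is invertible over $A/\mathfrak r$ and in particular nonzero, whence (recalling $\mathfrak r = A - U(A)$) we have $h(v_i,v_j) \in U(A)$ for some pair of basis vectors.

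For part (1), where $*$ is unramified or $h$ is hermitian, I take the pair $u = v_i$, $v = v_j$ just found with $h(u,v) \in U(A)$ and apply Lemma \ref{lem:unitproductimpliesunitlength} to produce $w \in V$ with $h(w,w) \in U(A)$. Lemma \ref{lem:unitlengthimpliesbasisvector} then yields the orthogonal decomposition $V = Aw \oplus Aw^\perp$ with both summands free. The summand $Aw$ contributes the diagonal entry $h(w,w)$, the complement $Aw^\perp$ is nondegenerate by the observation above, and induction on its rank furnishes a basis with diagonal Gram matrix; concatenating $w$ with this basis diagonalises $V$. The base case $m=1$ is immediate.

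For part (2), where $*$ is ramified and $h$ is skew hermitian, recall that $h(x,x) \in \mathfrak r$ for every $x \in V$, so every diagonal entry of $\overline G$ vanishes; hence the unit entry located above must be off-diagonal, that is $i \neq j$, and with $u = v_i$, $v = v_j$ we have $h(u,v) \in U(A)$. Lemma \ref{lem:skewunitproduct} gives $V = (Au+Av) \oplus (Au+Av)^\perp$ with both summands free, the first of rank two. By the observation above both summands are nondegenerate, so the rank two summand has an invertible (necessarily skew hermitian) $2 \times 2$ Gram matrix, and induction applied to the complement decomposes it into invertible $2 \times 2$ skew hermitian blocks. The evenness of $m$ need not be assumed but emerges from the argument: were some complement of odd rank, the process would eventually be forced to treat a nondegenerate skew hermitian form of rank one, whose Gram matrix $(h(w,w))$ has $h(w,w) \in \mathfrak r$ and so is not invertible, a contradiction. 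Thus the process terminates at rank zero after stripping off $m/2$ blocks.

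Given the groundwork of Sections \ref{sec:smallnondegen} and \ref{sec:completelocalrings}, the individual steps are routine, and I do not expect a genuine obstacle. The one point I would take care to isolate, as I have done at the outset, is that nondegeneracy is inherited by the orthogonal complement of a nondegenerate summand, since this is precisely what licenses the inductive descent in both cases; the remaining subtlety, the parity of $m$ in part (2), is then handled uniformly by the impossibility of a nondegenerate rank one skew hermitian form in the ramified setting.
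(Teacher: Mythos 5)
Your proposal is correct and follows essentially the same route as the paper's proof: locate a unit value of $h$ from the invertible Gram matrix, split off a nondegenerate rank one summand via Lemmas \ref{lem:unitproductimpliesunitlength} and \ref{lem:unitlengthimpliesbasisvector} (or a rank two summand via Lemma \ref{lem:skewunitproduct} in the ramified skew hermitian case), and induct on the rank of the orthogonal complement. Your explicit justification that nondegeneracy passes to the complement, and your determinant argument for the evenness of $m$ in part (2), are details the paper leaves implicit but are exactly the right ones.
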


\begin{proof}
    Since \(h\) is nondegenerate
    the Gram matrix of a basis is invertible.
    Since \(A\) is local
    some entry of this Gram matrix must be a unit.
    Therefore there exist vectors \(u,v\)
    such that \(h(u,v) \in U(A)\).
    The results of Section \ref{sec:smallnondegen}
    show that in the ramified skew hermitian case there is a nondegenerate
    rank two submodule of \(V\) and that in all other cases there is a
    nondegenerate rank one submodule. Moreover, if \(U\) is this
    rank one or two submodule then by
    Lemma \ref{lem:unitlengthimpliesbasisvector}
    or Lemma \ref{lem:skewunitproduct}, \(V = U \oplus U^{\perp}\)
    and \(U^\perp\) is a free submodule of \(V\).
    Now, it is clear that \(h|_{U^\perp}\) is also nondegenerate.
    The required conclusions follow by induction on the rank of
    \( V \).
\end{proof}

\begin{thm}
    \label{thm:henselforforms}
    Suppose that \(A\) is complete and let
    \(h\) and \(h'\) be nondegenerate \(\varepsilon\)-hermitian
    forms on \(V\). Then \( h\) and \(h'\) are equivalent if and
    only if \( h_0\) and \(h'_0\) are equivalent. In particular,
    if \( *\) is ramified and \(h\) and \(h'\) are skew
    hermitian, then $h$ and $h'$ are equivalent.
    \end{thm}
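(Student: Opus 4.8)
=== PROOF PROPOSAL ===

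\textbf{Overview of the strategy.} The plan is to prove that equivalence of nondegenerate $\varepsilon$-hermitian forms over a complete local ring $A$ reduces to equivalence of the induced forms $h_0, h_0'$ over the residue field. One direction is trivial: if $h$ and $h'$ are equivalent via an $A$-isomorphism $\varphi$, then $\varphi$ descends to an isomorphism of $V/V(1)$ respecting the forms, so $h_0$ and $h_0'$ are equivalent. The substance is the converse, and the natural approach is to run a lifting argument analogous to the classical reductions of O'Meara and Jacobowitz, but organized around the structural decompositions already established in Theorem~\ref{thm:canonicalformnondegenerate}.

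\textbf{Main line of argument.} First I would split into the two cases of Theorem~\ref{thm:canonicalformnondegenerate}. In the unramified or hermitian case, both $h$ and $h'$ admit diagonal Gram matrices, say with diagonal entries $b_1, \dots, b_m$ and $b_1', \dots, b_m'$ respectively; in the ramified skew hermitian case, both decompose into $2\times 2$ skew hermitian blocks. Since $h_0$ and $h_0'$ are equivalent, the corresponding diagonal entries (or $2\times 2$ blocks) agree after reduction modulo $\mathfrak r$, up to reordering and up to the equivalence relation on the residue field. The key technical tool is Corollary~\ref{cor:henselinV}: if $h(u,u) \equiv b \bmod \mathfrak r$ with $b$ a unit satisfying $b^* = \varepsilon b$, then one can rescale $u$ within $Au$ to a vector $w$ with $h(w,w) = b$ exactly. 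This lets me lift a residue-field diagonalization to an honest diagonalization over $A$ with prescribed unit diagonal entries matching those of $h'$, giving equivalence. In the ramified skew hermitian case, one instead invokes Lemma~\ref{lem:completenessimplessymplecticpairs} to replace each rank-two block by a symplectic pair, so that $h$ has a symplectic basis with all blocks equal to $\begin{pmatrix} 0 & 1 \\ -1 & 0 \end{pmatrix}$; the same holds for $h'$, and hence $h$ and $h'$ are equivalent regardless of $h_0, h_0'$, which establishes the final ``in particular'' clause at once.

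\textbf{Reconciling the residue-field equivalence with the lift.} The delicate point is that equivalence of $h_0$ and $h_0'$ over $A/\mathfrak r$ does not literally say the diagonal entries coincide; it says there is a residue-field change of basis matching the forms. I would handle this by first diagonalizing $h_0$ and $h_0'$ over the residue field, noting that the diagonal entries are only well-defined up to multiplication by norms $\bar a^* \bar a$ from $A/\mathfrak r$. Then, using Lemma~\ref{lem:henselwithinvolution}, any residue-field norm that scales an entry $\bar b_i$ lifts to a genuine norm over $A$, so the freedom available at the residue-field level is exactly the freedom available over $A$. Thus I can arrange the lifted diagonal entries of $h$ to equal those of $h'$, completing the argument.

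\textbf{Anticipated main obstacle.} I expect the hardest step to be the bookkeeping that makes the induction go through cleanly when passing between $V$ and an orthogonal complement $U^\perp$: one must check that restricting to $U^\perp$ is compatible with the reduction $h \mapsto h_0$, i.e. that $(h|_{U^\perp})_0$ is naturally identified with the restriction of $h_0$ to the corresponding residue-field subspace, so that the residue-field equivalence can be carried along the induction. Verifying this compatibility, together with the matching of diagonal data up to liftable norms, is the crux; the individual lifting steps are then immediate from Corollary~\ref{cor:henselinV} and Lemma~\ref{lem:completenessimplessymplecticpairs}.
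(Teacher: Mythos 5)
Your overall architecture (symplectic bases via Lemma \ref{lem:completenessimplessymplecticpairs} in the ramified skew hermitian case; diagonalization plus Hensel lifting via Corollary \ref{cor:henselinV} and a Gram--Schmidt induction otherwise) is the same as the paper's, but your ``Reconciling'' step contains a genuine gap. You correctly identify the difficulty --- equivalence of $h_0$ and $h_0'$ does not say the residue diagonal entries coincide --- but your proposed resolution, that the diagonal entries of a diagonalization are well defined up to entrywise multiplication by norms $\bar a^*\bar a$, and hence can be matched entry by entry after reordering, is false. Already over $A/\mathfrak r=\mathbb{F}_5$ with trivial involution (so norms are squares), $\mathrm{diag}(1,1)$ and $\mathrm{diag}(2,2)$ are equivalent (same dimension and discriminant), yet $2$ is a nonsquare, so no entrywise matching up to norms exists. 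In general, two equivalent diagonal forms over a field are related by an arbitrary congruence, not by a permutation composed with norm scalings, so Lemma \ref{lem:henselwithinvolution} cannot be applied entry by entry as you propose.

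The repair is the paper's opening move, which sidesteps the issue entirely: if $\overline{X}'^*\overline{C}\,\overline{X}=\overline{D}$ over $A/\mathfrak r$, lift $\overline{X}$ to $X\in M_m(A)$; since $A$ is local, $X\in GL_m(A)$, so replacing $C$ by $X'^*CX$ reduces to the case $C\equiv D\bmod\mathfrak r$, i.e.\ $h(u,v)\equiv h'(u,v)\bmod\mathfrak r$ for all $u,v$. From there your lifting induction works verbatim: take a basis diagonalizing $h$ with unit diagonal entries $M_{11},\dots,M_{mm}$, use Corollary \ref{cor:henselinV} to find $w_1\in Av_1$ with $h'(w_1,w_1)=M_{11}$, orthogonalize the remaining vectors against $w_1$ relative to $h'$, note the residual block is still congruent to $\mathrm{diag}(M_{22},\dots,M_{mm})$ modulo $\mathfrak r$, and induct. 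This also disposes of your worry about compatibility of $(h|_{U^\perp})_0$ with restriction, since the induction is carried out on Gram matrices that remain congruent modulo $\mathfrak r$ at each stage.
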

\addtocounter{thm}{-1}
\begin{thm}[Matrix version]
    Suppose that \(A\) is complete and let
    \( C , D \in GL_m(A) \). Then \( C \) and \( D \) are
    \( * \)-congruent if and only \( \overline C \) and
    \( \overline D \) are \( * \)-congruent over \( A/\mathfrak r \). In particular,
    if \( *\) is ramified and \(C\) and \(D\) are skew
    hermitian, then $C$ and $D$ are \( * \)-congruent.
\end{thm}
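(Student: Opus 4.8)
The plan is to deduce this matrix version directly from the form version, Theorem \ref{thm:henselforforms}, using the dictionary between $\varepsilon$-hermitian matrices and forms established in Section \ref{subsec:terminology}. I take $V = A^m$ with standard basis $e_1,\dots,e_m$, and I read the statement as implicitly assuming that $C$ and $D$ are $\varepsilon$-hermitian (for a common $\varepsilon$), as is needed for $*$-congruence to be meaningful and for the skew hermitian clause to make sense. Since $C,D \in GL_m(A)$, the forms $h_C,h_D$ on $V$ defined by $h_C(u,v)=u'^*Cv$ are nondegenerate, their Gram matrices in the standard basis being $C$ and $D$. I will use the two recorded equivalences: $C,D$ are $*$-congruent over $A$ exactly when $h_C,h_D$ are equivalent, and $\overline C,\overline D$ are $*$-congruent over $A/\mathfrak r$ exactly when $h_{\overline C},h_{\overline D}$ are equivalent.

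The one step requiring genuine attention is to identify the residue-field form $(h_C)_0$ with $h_{\overline C}$. First I would invoke Lemma \ref{lem:ri} for the nondegenerate form $h_C$ to obtain $V(1)=\mathfrak r V$, so that $V/V(1)=(A/\mathfrak r)^m$ and the reductions $\overline e_1,\dots,\overline e_m$ of the standard basis form an $A/\mathfrak r$-basis. By the definition of $h_0$ in Section \ref{sec:nondegenforms}, $(h_C)_0(\overline e_i,\overline e_j)=\overline{h_C(e_i,e_j)}=\overline{C_{ij}}$, so the Gram matrix of this basis under $(h_C)_0$ is exactly $\overline C$; that is, $(h_C)_0=h_{\overline C}$, and likewise $(h_D)_0=h_{\overline D}$. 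This compatibility of the two reductions---of matrices by entrywise passage to the residue field, and of forms by passage to $V/V(1)$---is the crux of the argument, and it rests entirely on the equality $V(1)=\mathfrak r V$ furnished by Lemma \ref{lem:ri}.

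With this in hand the conclusion is immediate. By Theorem \ref{thm:henselforforms}, $h_C$ and $h_D$ are equivalent if and only if $(h_C)_0$ and $(h_D)_0$ are equivalent; translating both sides through the dictionary and the identification above, $C$ and $D$ are $*$-congruent over $A$ if and only if $\overline C$ and $\overline D$ are $*$-congruent over $A/\mathfrak r$, which is the main assertion. For the final sentence, when $*$ is ramified and $C,D$ are skew hermitian, the corresponding clause of Theorem \ref{thm:henselforforms} yields that $h_C$ and $h_D$ are equivalent without qualification, whence $C$ and $D$ are $*$-congruent. I do not anticipate any genuine obstacle, since the statement is a faithful matrix reformulation of the form version.
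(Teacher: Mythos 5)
Your dictionary argument is internally correct: the identification $(h_C)_0 = h_{\overline C}$ via Lemma \ref{lem:ri} (so that $V(1)=\mathfrak r V$, $V/V(1)\cong (A/\mathfrak r)^m$, and the reduced standard basis has Gram matrix $\overline C$ under $(h_C)_0$) is exactly right, and \emph{given} the form version of Theorem \ref{thm:henselforforms} the matrix version does follow by this translation. The difficulty is that in the paper the two versions are not two theorems but one: the matrix version is the same theorem restated (they share a theorem number), and the single proof that follows the matrix statement establishes both at once. There is no independent, prior proof of the form version for you to invoke, so in context your argument is circular --- it shows only that the two formulations are equivalent, which the paper treats as part of the standing dictionary of Section \ref{subsec:terminology}, and not the theorem itself.

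The actual mathematical content, which your proposal never touches, is the lifting step. Given that $\overline C$ and $\overline D$ are $*$-congruent, one lifts the congruence transformation to some $X\in M_m(A)$ with $\overline X$ invertible, notes that $X\in GL_m(A)$ because $A$ is local, and replaces $C$ by $X'^*CX$ to reduce to the case $C\equiv D\bmod\mathfrak r$; one must then upgrade this residual agreement to an honest congruence over $A$, and this is where completeness enters. In the ramified skew hermitian case the paper invokes Lemma \ref{lem:completenessimplessymplecticpairs} to produce symplectic bases for both forms; in all other cases it diagonalizes $h$ with unit diagonal entries (Lemmas \ref{lem:unitlengthimpliesbasisvector} and \ref{lem:unitproductimpliesunitlength}), uses Corollary \ref{cor:henselinV} (Hensel's lemma) to find $w_1\in Av_1$ with $h'(w_1,w_1)=h(v_1,v_1)$, and finishes by a Gram--Schmidt style induction. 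A proof of the matrix version must contain this argument (or an equivalent one); reducing to the form version does not discharge it.
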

\begin{proof}
    The only if direction is obvious.
        For the other direction 
        suppose that \( \overline{C} \) and \( \overline{D} \) are 
        \( * \)-congruent. So there is 
        some matrix \( X \in M_m(A) \) such that \( \overline{X} 
        \in GL_m(A/\mathfrak r)\) and \( X'^*CX \equiv
        D \mod \mathfrak r\). Since \( A \) is local, \( X \in GL_m(A) \)
        and \( C \) is \( * \)-congruent to \( X'^*CX \).
        So replacing \( C \) by 
        \( X'^*CX \) we can, without loss of generality, reduce to the 
        case that \( C \equiv D \mod \mathfrak r \). Equivalently, we may assume 
        without loss of generality that 
        \( h(u,v) \equiv h'(u,v) \mod \mathfrak r \) for all \( u,v \in V \).

    Now we deal with the
    case where \( *\) is ramified and \(h\) is skew
    hermitian. In this case, Lemma \ref{lem:completenessimplessymplecticpairs}
    implies that $V$ has symplectic bases relative to $h$ and $h'$, whence they are equivalent.

    In all other cases, Lemmas \ref{lem:unitlengthimpliesbasisvector} and \ref{lem:unitproductimpliesunitlength} ensure
    the existence of a basis \(v_1,\cdots,v_m\) of $V$
    whose corresponding
    Gram matrix relative to $h$, say \( M \), is diagonal with only units on the diagonal.
    By Corollary \ref{cor:henselinV} there is some \(w_1
    \in Av_1\) such that \(h'(w_1,w_1) = h(v_1,v_1)\).
    Moreover, it is clear that, for \(j \geq 2\),
    \( h'(w_1,v_j)
    \equiv h'(v_1,v_j) \equiv h(v_1,v_j) \equiv 0
    \mod \mathfrak r\).
    For \(j \geq 2\), let
    \( w_j = v_j - h'(w_1,v_j)h'(w_1,w_1)^{-1}w_1\).
    Now \(w_1,\cdots,w_m\) is a basis of \(V\) whose
    Gram matrix with respect to \(h'\) is of the
    form
    \[ \begin{pmatrix} M_{11} & 0 \\ 0 & N\end{pmatrix}. \]
    Moreover \( N \equiv \text{diag}(M_{22},\cdots,M_{mm})
    \mod \mathfrak r\). Inductively, we may assume that there is
    some \( X \in GL_{m-1}(A)\) such that \(
    X'^*NX = \text{diag}(M_{22},\cdots,M_{mm})\). It follows
    that there is a basis of \(V\) whose Gram matrix
    with respect to \(h'\) is equal to \( M \).
    Note that Corollary \ref{cor:henselinV} provides the base case 
    of the induction.
\end{proof}

We list some  noteworthy corollaries and special cases of
Theorem \ref{thm:henselforforms}.
\begin{prop}
    \label{prop:sympbasis}
    If \( A \) is complete, \( * \) is ramified and \( h\) is
    skew hermitian and nondegenerate, then \( V \) has a symplectic basis.
    \qed
\end{prop}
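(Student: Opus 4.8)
The plan is to prove the proposition by induction on the rank $m$ of $V$, peeling off one symplectic pair at each stage. First I would record two preliminary observations. Since $h$ is skew hermitian and $*$ is ramified, every diagonal Gram entry satisfies $h(v,v) \in \mathfrak r$; and by part (2) of Theorem \ref{thm:canonicalformnondegenerate} the rank $m$ is necessarily even, which is exactly what a symplectic basis requires. The base case $m = 0$ is vacuous.

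For the inductive step, nondegeneracy forces the Gram matrix of any basis into $GL_m(A)$, so some entry is a unit because $A$ is local. As all diagonal entries lie in $\mathfrak r$, this unit entry is off the diagonal, yielding $u, v \in V$ with $h(u,v) \in U(A)$. Lemma \ref{lem:completenessimplessymplecticpairs} then supplies a symplectic pair $(v_1, w_1)$ spanning the same rank-two submodule $U = Au + Av$, and Lemma \ref{lem:skewunitproduct} gives the orthogonal decomposition $V = U \oplus U^\perp$ with $U^\perp$ free. Exactly as in the proof of Theorem \ref{thm:canonicalformnondegenerate}, the restriction $h|_{U^\perp}$ is again nondegenerate, so the inductive hypothesis furnishes a symplectic basis $v_2, w_2, \dots, v_l, w_l$ of $U^\perp$.

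It then remains only to verify that $v_1, w_1, v_2, w_2, \dots, v_l, w_l$ is a symplectic basis of $V$. The pair $(v_1, w_1)$ is symplectic by construction and the remaining pairs are symplectic by hypothesis; every cross term between $U$ and $U^\perp$ vanishes because $U^\perp$ is orthogonal to $U$, and the cross terms inside $U^\perp$ vanish by the inductive hypothesis. These are precisely the relations in the definition of a symplectic basis.

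Since Lemma \ref{lem:completenessimplessymplecticpairs} has already absorbed the genuinely hard analytic content (its appeal to completeness through Lemma \ref{lem:quadraticequationhasroot}), I expect no serious obstacle here: the argument is essentially a repackaging of the ramified skew hermitian case of the proof of Theorem \ref{thm:henselforforms}, and the only point needing a little care is the nondegeneracy of $h|_{U^\perp}$ that keeps the induction going. As an alternative route one could avoid induction altogether: put on $V$ the model form $h'$ whose Gram matrix in a fixed basis is block diagonal with each block equal to $\bigl(\begin{smallmatrix} 0 & 1 \\ -1 & 0\end{smallmatrix}\bigr)$ --- this is skew hermitian because its entries are fixed by $*$ --- and then apply the ``in particular'' clause of Theorem \ref{thm:henselforforms} to get an equivalence $\varphi$ from $h$ to $h'$; applying $\varphi^{-1}$ to the obvious symplectic basis of $h'$ produces one for $h$.
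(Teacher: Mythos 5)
Your proof is correct and is essentially the paper's argument: the paper presents this proposition as an immediate corollary of Theorem \ref{thm:henselforforms}, whose ramified skew hermitian case is proved by exactly your induction (split off a symplectic pair via Lemma \ref{lem:completenessimplessymplecticpairs} together with Lemma \ref{lem:skewunitproduct}, then recurse on the nondegenerate complement). Both of your routes --- the explicit induction and the comparison with the model block-diagonal form via the ``in particular'' clause --- match what the paper does.
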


We have a canonical
imbedding \( R/\mathfrak m\hookrightarrow A/\mathfrak r\) and we will view \( R/\mathfrak m\) as a subfield of~\(A/\mathfrak r\) by means
of this imbedding. If \( *\) is ramified then \( R/\mathfrak m = A/\mathfrak r\).
Suppose \( *\) is unramified. Then
\(R/\mathfrak m \) is the fixed field of an automorphism
of \(A/\mathfrak r\) of order 2. If \(A/\mathfrak r\) is quadratically closed (in the sense that it has no extensions of degree 2),
then by the Diller-Dress
theorem (see \cite{L}, p. 235), \( R/\mathfrak m \) is a Euclidean field (this is an ordered field wherein every nonnegative element is a square).

\begin{prop}\label{prop:C}
    Suppose that \(A\) is complete and that
    \( A/\mathfrak r \) is quadratically closed.
    If \( * \) is unramified and \( h \) is
    a nondegenerate hermitian form then
    \( V \) has a basis whose Gram matrix is diagonal and
    such that all the diagonal entries are \( \pm 1 \).
    Moreover, given any two such bases, the signatures of the
    corresponding Gram matrices are the same.
\end{prop}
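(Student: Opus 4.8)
The plan is to combine the diagonalisation result of Theorem \ref{thm:canonicalformnondegenerate}(1) with the Hensel-type normalisation of Corollary \ref{cor:henselinV}, and then to establish the invariance of the signature by descending to the Euclidean field \( R/\mathfrak m \). First I would invoke Theorem \ref{thm:canonicalformnondegenerate}(1): since \( * \) is unramified and \( h \) is hermitian, \( V \) has a basis whose Gram matrix is diagonal, say \( \mathrm{diag}(d_1,\dots,d_m) \) with each \( d_i \in U(A) \). Because \( h \) is hermitian, each \( d_i \) satisfies \( d_i^* = d_i \), so \( d_i \in U(R) \). The key point is then that each diagonal entry can be rescaled to \( \pm 1 \): replacing the \( i \)-th basis vector \( v_i \) by \( c_i v_i \) changes \( d_i \) to \( c_i^* c_i d_i \), so I must show that every unit of \( R \) is, up to sign, a norm \( c^* c \).

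To see this I would pass to the residue field. Since \( A/\mathfrak r \) is quadratically closed and \( * \) is unramified, the Diller--Dress theorem (as recalled just before the statement) tells us that \( R/\mathfrak m \) is a Euclidean field: it is ordered and every nonnegative element is a square. Thus for \( d_i \in U(R) \), its residue \( \overline{d_i} \in R/\mathfrak m \) is either a nonzero square or the negative of one; in either case \( \pm \overline{d_i} \) is a square in \( R/\mathfrak m \), hence a norm from \( A/\mathfrak r \) (a square \( \overline{c}^{\,2} = \overline{c}^* \overline{c} \) when \( \overline{c} \) is fixed by \( * \)). Choosing the sign \( \epsilon_i = \pm 1 \) so that \( \overline{\epsilon_i d_i} \) is a square, I apply Corollary \ref{cor:henselinV} with \( b = \epsilon_i \): since \( h(v_i,v_i) = d_i \equiv \epsilon_i \cdot (\text{square}) \), the corollary furnishes \( w_i \in A v_i \) with \( h(w_i,w_i) = \epsilon_i \). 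Replacing each \( v_i \) by the corresponding \( w_i \) yields a basis with Gram matrix \( \mathrm{diag}(\epsilon_1,\dots,\epsilon_m) \), proving the first assertion.

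For the signature statement, the natural approach is reduction modulo \( \mathfrak r \). Given two diagonal Gram matrices \( \mathrm{diag}(\epsilon_1,\dots,\epsilon_m) \) and \( \mathrm{diag}(\eta_1,\dots,\eta_m) \) with entries \( \pm 1 \), both representing \( h \), the two bases are related by some \( X \in GL_m(A) \) with \( X'^* \mathrm{diag}(\epsilon) X = \mathrm{diag}(\eta) \). Reducing this equation modulo \( \mathfrak r \) gives a \( * \)-congruence over \( A/\mathfrak r \) between the images of the two diagonal matrices. Restricting attention to the fixed ring, these are diagonal matrices over the Euclidean field \( R/\mathfrak m \) with entries \( \pm 1 \), and the congruence is realised by \( \overline{X} \in GL_m(A/\mathfrak r) \). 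The remaining task is a purely field-theoretic one: over an ordered field in which \( 1 \) and \( -1 \) are not in the same square class, the number of \( +1 \)'s and \( -1 \)'s is a \( * \)-congruence invariant --- this is essentially Sylvester's law of inertia adapted to the hermitian setting over a Euclidean field.

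The main obstacle I anticipate is the last step: making the descent to the fixed field rigorous and then proving the inertia statement in the hermitian (rather than symmetric) setting. One must verify that a hermitian \( * \)-congruence over \( A/\mathfrak r \) between two \( \pm 1 \) diagonal matrices forces equality of signatures when the fixed field is Euclidean; the cleanest route is to note that the positive and negative eigenspaces are detected by the sign of \( h(v,v) \) for \( v \) in the fixed-field subspace, and that no isotropic-free subspace of dimension exceeding \( \min(p,q) \) can exist, exactly as in the classical real case. Care is needed because the form takes values in \( A/\mathfrak r \) rather than \( R/\mathfrak m \), so one should first observe that the restriction of the hermitian form to the \( R/\mathfrak m \)-span of a standard basis is a genuine symmetric bilinear form over the Euclidean field, at which point the classical Sylvester argument applies verbatim.
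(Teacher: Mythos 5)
Your proof is correct and is essentially the paper's argument: the paper invokes Theorem \ref{thm:henselforforms} to reduce the entire statement to the residue field --- the quadratic extension of a Euclidean field, by the Diller--Dress remark --- and then appeals to the classical Sylvester argument as over \( \mathbb C \), which is exactly the reduction you carry out by hand via Theorem \ref{thm:canonicalformnondegenerate}, Corollary \ref{cor:henselinV} and reduction modulo \( \mathfrak r \). The only cosmetic point is that Corollary \ref{cor:henselinV} as stated requires \( h(u,u)\equiv \epsilon_i \bmod \mathfrak r \), so you should first replace \( v_i \) by \( c^{-1}v_i \) for a unit lift \( c \) of the square root of \( \overline{\epsilon_i d_i} \) before applying it; since \( A c^{-1}v_i = A v_i \) this changes nothing.
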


\begin{proof}
    By Theorem \ref{thm:henselforforms}
    it suffices to prove this result in the case that
    \( A \) is a field (i.e. \( \mathfrak r = 0 \)). By our above remarks,
    $A$ is the quadratic extension of a Euclidean field. The result in this case
    goes exactly as in a classical case when $A=\mathbb C$.
     \end{proof}

Similarly we have

\begin{prop}
    Suppose that \(A\) is complete and that
    \( A/\mathfrak r \) is quadratically closed.
    If \( * \) is unramified and \( h \) is a nondegenerate
    skew hermitian form then,
    given \( b \in U(A)\cap S \),
    \( V \) has a basis whose Gram matrix is diagonal and
    such that all the diagonal entries are \( \pm b \).
    Moreover, given any two such bases, the
    number of occurrences of \( b \) are the same in each of the
    corresponding Gram matrices.
\end{prop}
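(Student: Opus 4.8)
The plan is to reduce the statement to the hermitian case of Proposition \ref{prop:C} via the scaling device already used in Lemma \ref{lem:unitproductimpliesunitlength}. The point is that multiplying a skew hermitian form by an element of \( S \) yields a hermitian form. Since the given \( b \) lies in \( U(A)\cap S \), so does \( b^{-1} \), because \( (b^{-1})^*=(b^*)^{-1}=-b^{-1} \); note also that the hypothesis that \( * \) is unramified is exactly what makes \( U(A)\cap S \) nonempty, so the statement is not vacuous. Setting \( g=b^{-1}h \), I would first check that \( g \) is hermitian: using commutativity of \( A \), the identity \( h(v,u)^*=-h(u,v) \) coming from skew hermiticity, and \( b^{-1}\in S \), one computes \( g(v,u)^*=-b^{-1}h(v,u)^*=b^{-1}h(u,v)=g(u,v) \). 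Moreover \( g \) is nondegenerate, since its Gram matrices are obtained from those of \( h \) by multiplication by the unit \( b^{-1} \).

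With \( g \) in hand, I would apply Proposition \ref{prop:C}: there is a basis \( v_1,\dots,v_m \) of \( V \) whose Gram matrix relative to \( g \) is \( \mathrm{diag}(\varepsilon_1,\dots,\varepsilon_m) \) with each \( \varepsilon_i\in\{\pm1\} \). Since \( h=bg \), the Gram matrix of the same basis relative to \( h \) is \( \mathrm{diag}(\varepsilon_1 b,\dots,\varepsilon_m b) \), all of whose entries are \( \pm b \). This establishes the existence assertion.

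For the uniqueness assertion, observe that the correspondence is exact in both directions: a basis has \( h \)-Gram matrix diagonal with every entry equal to \( \pm b \) if and only if, after dividing through by \( b \), its \( g \)-Gram matrix is diagonal with every entry \( \pm1 \), and under this correspondence the number of diagonal entries equal to \( b \) for \( h \) equals the number equal to \( +1 \) for \( g \). The latter is the signature datum of the hermitian form \( g \), which Proposition \ref{prop:C} asserts is independent of the chosen basis. Hence the number of occurrences of \( b \) is the same for any two bases of the stated form. I do not expect a genuine obstacle here: the entire content is the reduction, and the only nontrivial input is the invariance of the signature borrowed from Proposition \ref{prop:C}; completeness and quadratic closedness of \( A/\mathfrak r \) enter solely through that invocation.
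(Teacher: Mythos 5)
Your proposal is correct and is exactly the paper's argument: the paper's proof consists of the single line ``apply the previous proposition to the hermitian form \( b^{-1}h \),'' and you have simply written out the details of that reduction, including the verification that \( b^{-1}h \) is hermitian and the back-and-forth translation of the signature datum.
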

\begin{proof}
    Apply the previous proposition to the hermitian form \( b^{-1}h \).
\end{proof}

\begin{prop}
    Suppose that \(A\) is complete and that
    \( A/\mathfrak r \) is quadratically closed.
    If \( * \) is ramified and \( h \) is a nondegenerate
    hermitian form then
    \( V \) has an orthonormal basis.
\end{prop}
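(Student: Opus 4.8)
The plan is to follow the reduction strategy used in the proof of Proposition~\ref{prop:C}, pushing the problem down to the residue field via Theorem~\ref{thm:henselforforms}. Observe that exhibiting an orthonormal basis of \(V\) is the same as showing that \(h\) is equivalent to the form \(h'\) on \(V\) whose Gram matrix in some fixed basis is the identity \(I_m\). Both \(h\) and \(h'\) are nondegenerate hermitian forms, so by Theorem~\ref{thm:henselforforms} it suffices to prove that the induced residue forms \(h_0\) and \(h'_0\) are equivalent over \(A/\mathfrak r\). Since \(h'\) has Gram matrix \(I_m\), its residue form \(h'_0\) is the standard form, and the task collapses to showing that \(h_0\) is equivalent to the identity over the field \(A/\mathfrak r\).

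Next I would settle the field-level problem. Because \(*\) is ramified, the involution induced on \(A/\mathfrak r\) is trivial, and since \(\varepsilon = 1\) this forces \(h_0\) to be an ordinary nondegenerate symmetric bilinear form over \(k := A/\mathfrak r\). As \(2 \in U(A)\), the field \(k\) has characteristic different from \(2\); combined with the hypothesis that \(k\) is quadratically closed, every element of \(k\) is a square, since in characteristic \(\neq 2\) any degree-two extension arises by adjoining a square root of a nonsquare, so the absence of such extensions means there are no nonsquares. A nondegenerate symmetric form over a field of characteristic \(\neq 2\) diagonalises, and rescaling each basis vector by the inverse of a square root of its (necessarily nonzero) length turns every diagonal entry into \(1\) while leaving the off-diagonal zeros intact. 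Hence \(h_0\) is equivalent to the identity form, which completes the reduction and the proof.

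The only genuine content lies in the field-level statement, and within it the point to get right is the passage from \emph{no degree-two extensions} to \emph{every element is a square}; this is precisely where the characteristic \(\neq 2\) hypothesis, supplied by \(2\) being a unit, is used. Everything else is formal, being either the black-box reduction of Theorem~\ref{thm:henselforforms} or the classical diagonalisation of symmetric forms. I note in passing that one could instead argue directly over \(A\): diagonalise \(h\) with unit entries \(d_i \in U(R)\) by Theorem~\ref{thm:canonicalformnondegenerate}, observe that each \(\overline{d_i}\) is a square in \(k\) and hence (ramified case) a value of the residual norm map, and then lift via Lemma~\ref{lem:henselwithinvolution} or Corollary~\ref{cor:henselinV} to rescale each \(d_i\) to \(1\); this alternative makes transparent that the underlying mechanism is the surjectivity of the norm map onto \(U(R)\).
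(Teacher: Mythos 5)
Your proposal is correct and follows essentially the same route as the paper: reduce to the residue field via Theorem~\ref{thm:henselforforms} and then invoke the classical fact that a nondegenerate symmetric form over a quadratically closed field of characteristic not $2$ has an orthonormal basis. You merely spell out the details of that classical fact (and sketch a direct lifting alternative), which the paper cites as known.
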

\begin{proof}
    This follows by combining Theorem \ref{thm:henselforforms}
    with the classical result that every nondegenerate
    symmetric form over a quadratically closed field
    of characteristic not 2 has an orthonormal basis.
\end{proof}

\section{Discrete Valuation Rings}
\label{sec:DVRs}

For the remainder of the paper \(A\) will be
a discrete valuation ring, that is, a local principal
ideal domain. 
A {\em uniformiser} of \( A \) is a generator of the maximal
ideal.

\begin{lem}
    \label{lem:nontrivialimpliesskewgenerator}
    If \(*\) is a nontrivial involution of \(A\) then
    \(A\) has a uniformiser \( y \) such that \( y^* = -y\).
\end{lem}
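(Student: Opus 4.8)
The plan is to start from an arbitrary uniformiser \(\pi\) and manufacture a skew one. The first thing I would record is that \(*\) preserves the valuation: since \(*\) is a ring automorphism it permutes units and hence fixes \(\mathfrak r\) setwise, so \(\pi^*\) is again a uniformiser and \(v(a^*) = v(a)\) for all \(a\), where \(v\) denotes the normalised valuation of the discrete valuation ring. Writing \(2\pi = (\pi+\pi^*) + (\pi-\pi^*)\) splits \(\pi\) according to \(A = R \oplus S\): the element \(w := \pi + \pi^*\) lies in \(R\) and \(z := \pi - \pi^*\) lies in \(S\).

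Next I would locate a uniformiser among \(w\) and \(z\). Since \(v(\pi^*) = v(\pi) = 1\), both \(w\) and \(z\) lie in \(\mathfrak r\), so \(v(w), v(z) \ge 1\); and since \(2\pi = w+z\) has valuation \(1\) (as \(2 \in U(A)\)), the ultrametric inequality forbids both valuations from exceeding \(1\). Hence at least one of \(w, z\) is itself a uniformiser. If \(z\) is a uniformiser we are immediately done, because \(z \in S\), that is, \(z^* = -z\).

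The remaining --- and only delicate --- case is when \(z\) fails (has valuation \(\ge 2\)) and the uniformiser we are handed lies in \(R\): say \(w\) generates \(\mathfrak r\) with \(w^* = w\). Here I would use that \(*\) is nontrivial to pick some nonzero \(s \in S\), and write \(s = w^k u\) with \(u \in U(A)\), where \(k = v(s)\). Applying \(*\) and using \(w^* = w\) gives \(w^k u^* = s^* = -s = -w^k u\); cancelling the nonzero element \(w^k\) in the domain \(A\) yields \(u^* = -u\). Thus \(u\) is a skew unit, and \(y := wu\) satisfies \(y^* = w^* u^* = -wu = -y\) while \(v(y) = v(w) + v(u) = 1\), so \(y\) is the desired skew uniformiser.

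The main obstacle is precisely this last case: the naive skew element \(\pi - \pi^*\) need not be a uniformiser, so one cannot simply antisymmetrise. The crux is to extract a skew unit \(u\) --- which is available exactly because \(*\) is nontrivial, forcing \(S \neq 0\) --- and then recombine it with the fixed uniformiser \(w\) to restore valuation \(1\) while preserving skewness. I note in passing that this case is exactly the unramified situation, since it produces a unit in \(S\); in the ramified case the dichotomy automatically selects \(z\).
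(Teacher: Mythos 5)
Your proof is correct and follows essentially the same route as the paper: split a uniformiser into its symmetric and skew parts, observe that one of them must still generate $\mathfrak r$, and in the case where the symmetric part wins, use nontriviality of $*$ to extract a skew unit $u$ and replace the uniformiser $w$ by $wu$. The only cosmetic difference is that you phrase the dichotomy via the valuation and the ultrametric inequality where the paper argues directly with $\mathfrak r - \mathfrak r^2$.
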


\begin{proof}
    Suppose that \( \mathfrak r = Az \). Now \( z = \frac12(z+z^*)
    +\frac12(z-z^*)\) and since \(A\) is local, at least one of
    \(\frac12(z+z^*)\) or \(\frac12(z-z^*) \) must lie in \(\mathfrak r
    -\mathfrak r^2\). So we can certainly choose some \(w\) such that
    \(w^* = \pm w\) and \(\mathfrak r = Aw\). Suppose that \(w^* = w\).
    Since \(*\) is nontrivial there is some nonzero \(u \in A\) such that
    \(u^* = -u\). Now \(u = bw^k\) for some \(b \in U(A)\) and since
    \(w^* = w\), we conclude that \(b^* = -b\). Now \(y = bw\)
    is the required generator of \(\mathfrak r\).
\end{proof}

Following the previous lemma we fix \( y\) so that \(Ay = \mathfrak r\)
and so that \(y^* = -y\)
in the case that \(*\) is a nontrivial involution. We agree that $y^\infty=0$.

Following O'Meara \cite{MR0067163}
we say that an \( \varepsilon \)-hermitian
matrix \( M \in M_{ m } (A) \) has an {\em O'Meara decomposition} if
$$M=\bigoplus_{i=0}^{s}a_i M_i,$$
where each \( a_i \in A \) and  every \(M_i \) is an invertible
hermitian or skew hermitian matrix.

Our next result is an extension of Theorem \ref{thm:canonicalformnondegenerate} to arbitrary forms, possibly degenerate,
and establishes the existence of an O'Meara decomposition for the Gram matrices of such forms.

\begin{thm}
    \label{thm:dvrcanonicaldecomp}
        \begin{enumerate}
        \item If \( * \) is unramified then \( V \)
            has a basis whose Gram matrix is diagonal.
        \item If \( * \) is ramified then \( V \) has a
            basis whose Gram matrix is the direct sum of a diagonal
            matrix (possibly of size zero) and a number (possibly zero)
            of \( 2\times 2 \) blocks each of the form
            \( y^dB \) where \( B \) is an invertible skew
            hermitian \( 2 \times 2 \) matrix.

    \end{enumerate}
\end{thm}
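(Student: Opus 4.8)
The strategy is to prove both parts simultaneously by induction on the rank $m$ of $V$, peeling off a suitable indecomposable summand at each stage. The key tool is that in a discrete valuation ring we can control the "depth" at which the form first produces a unit. Concretely, consider the Gram matrix $G$ of any basis. Every entry lies in $A$, so we may let $d$ be the smallest nonnegative integer such that $y^{-d}$ times some entry of $G$ is a unit — equivalently, $d$ is the minimal valuation occurring among the values $h(u,v)$ as $u,v$ range over $V$. Since $A$ is a domain and $h$ takes values in $A = \bigcup_k y^k U(A) \cup \{0\}$, this minimal valuation $d$ is well defined whenever $h \not\equiv 0$ (and if $h \equiv 0$ both statements are trivial, the Gram matrix being zero, which is diagonal). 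The point of factoring out $y^d$ is that the rescaled form $y^{-d}h$ now takes a unit value, which lets us invoke the rank-one and rank-two splitting results of Section~\ref{sec:smallnondegen}.

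The main step is the following. Choose $u,v \in V$ with $v(h(u,v)) = d$, so $h(u,v) = y^d \omega$ for a unit $\omega$. First I would treat part (1): when $*$ is unramified, Lemma~\ref{lem:unitproductimpliesunitlength} (applied to the form $y^{-d}h$, whose value $\omega$ on $(u,v)$ is a unit) yields a vector $w$ with $y^{-d}h(w,w) \in U(A)$, i.e. $h(w,w) = y^d \rho$ for a unit $\rho$. Then Lemma~\ref{lem:unitlengthimpliesbasisvector} applied to $y^{-d}h$ gives the orthogonal splitting $V = Aw \oplus Aw^\perp$ with both summands free, and $Aw$ contributes the diagonal entry $h(w,w) = y^d\rho$. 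The restriction $h|_{Aw^\perp}$ is again an $\varepsilon$-hermitian form on a free module of smaller rank, so the induction hypothesis finishes the diagonalisation. For part (2), with $*$ ramified and $h$ skew hermitian, one cannot in general produce a unit-length vector (indeed $h(x,x) \in \mathfrak r$ is forced when $d=0$), so instead I would apply Lemma~\ref{lem:skewunitproduct} to the form $y^{-d}h$ and the pair $(u,v)$: this splits off a rank-two free summand $U = Au + Av$ with $V = U \oplus U^\perp$, where $U^\perp$ is free and $h|_U$ has Gram matrix $y^d B$ with $B$ an invertible skew hermitian $2\times 2$ matrix over $A$. Again induction applied to $h|_{U^\perp}$ completes the proof, possibly also splitting off diagonal entries from one-dimensional pieces when, at later stages, a unit length vector does become available.

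**The main obstacle.** The delicate point is verifying that the rank-one (resp. rank-two) submodule really does split off as an \emph{orthogonal direct summand with free complement} when the form is scaled by $y^d$. The splitting lemmas of Section~\ref{sec:smallnondegen} are phrased for a form taking a unit value, and the only legitimate way to apply them is to the rescaled form $\tilde h := y^{-d} h$. But $y^{-d}h$ is \emph{not} a form with values in $A$ in general — it is a form into the fraction field. The clean fix, which I would spell out, is to observe that all three lemmas (\ref{lem:unitlengthimpliesbasisvector}, \ref{lem:skewunitproduct}, \ref{lem:unitproductimpliesunitlength}) only ever use ratios of the shape $h(u,v_i)h(u,u)^{-1}$ or solve linear systems whose coefficient matrix is the $1\times 1$ or $2\times 2$ Gram matrix; since $d$ is the \emph{minimal} valuation, every relevant value $h(\cdot,\cdot)$ is divisible by $y^d$, so these ratios lie in $A$ after cancellation of $y^d$, and the Gram matrix $y^{-d}\cdot(\text{Gram of }U)$ is genuinely invertible over $A$. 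Thus the arguments go through verbatim over $A$, and the resulting complement $U^\perp$ (defined using the original $h$, which has the same perpendicular space as $\tilde h$) is free. Once this compatibility is established, the induction on rank is routine and both statements follow.
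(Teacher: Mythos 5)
Your overall strategy --- induction on the rank of $V$, extraction of $d$ as the minimal valuation occurring among the values of $h$, passage to the $A$-valued form $h'$ with $h = y^d h'$, and splitting off a nondegenerate rank-one or rank-two summand of $h'$ via Lemmas \ref{lem:unitlengthimpliesbasisvector}, \ref{lem:skewunitproduct} and \ref{lem:unitproductimpliesunitlength} --- is exactly the paper's, and your resolution of the integrality worry is correct: since every value of $h$ lies in $y^d A$ and $A$ is a domain, $y^{-d}h$ is a genuine $A$-valued form with the same orthogonal complements as $h$.

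The gap is in the case analysis for part (2): you choose which splitting lemma to invoke according to whether the \emph{original} form $h$ is hermitian or skew hermitian, but the lemmas are applied to the \emph{rescaled} form $h'$, and $h'$ does not in general have the same type as $h$. Since $y$ is chosen with $y^* = -y$ whenever $*$ is nontrivial, one has $h'(v,u) = (-1)^d\varepsilon\, h'(u,v)^*$, so $h'$ is $(-1)^d\varepsilon$-hermitian. Hence in the ramified nontrivial case with $h$ skew hermitian and $d$ odd, $h'$ is \emph{hermitian}: Lemma \ref{lem:skewunitproduct} is not applicable (its hypothesis fails, and so does its proof, which relies on $h(u,u)\in\mathfrak r$ to make the $2\times 2$ Gram matrix invertible --- for a hermitian $h'$ the matrix $\bigl(\begin{smallmatrix} a & 1\\ 1 & b\end{smallmatrix}\bigr)$ may be singular), and one must instead split off a rank-one summand via Lemmas \ref{lem:unitproductimpliesunitlength} and \ref{lem:unitlengthimpliesbasisvector}; this is precisely why the statement of part (2) allows a diagonal block alongside the $2\times 2$ skew hermitian blocks. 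Symmetrically, part (2) also covers $h$ \emph{hermitian} with $*$ ramified, a case you never address: there the $2\times 2$ skew blocks are forced at the stages where $d$ is odd. Your closing remark that diagonal entries may ``become available at later stages'' gestures at the phenomenon but supplies no mechanism. The fix is the one observation the paper makes explicitly: $h'$ is $\varepsilon$-hermitian if $y^*=y$ and $(-1)^d\varepsilon$-hermitian if $y^*=-y$, and the branch between the rank-one and rank-two splittings must be taken according to the type of $h'$ (together with the ramification of $*$), not the type of $h$.
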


\begin{proof}
    We prove this by induction on the rank of \( V \).
    If \( h \) is identically zero then the theorem is true.
    Now suppose that \( h \) is not identically zero. Let \(
    d = \min \{ j: h(V,V) \subset \mathfrak r^j \}\). Since \( A \) is a
    domain there is a unique form \( h' \) on \( V \) such that
    \( h = y^dh' \). Clearly \( h' \) is \( \varepsilon \)-hermitian
    if \( y^* = y \) and is \( (-1)^d\varepsilon \)-hermitian
    if \( y^* = -y \). Moreover, by construction, there exist
    \( u,v \in V \) such that \( h'(u,v) \in U(A) \).

    If \( h' \) is skew hermitian and \( * \)
    is ramified then, by Lemma \ref{lem:skewunitproduct}
    there is some nondegenerate rank two submodule \( U \) such that 
    \( V = U\perp U^\perp \). 
    If \( h' \) is hermitian or if \( * \) is unramified then,
    by Lemma \ref{lem:unitlengthimpliesbasisvector} 
    and Lemma \ref{lem:unitproductimpliesunitlength} there 
    is some nondegenerate rank one submodule \( U \) such that 
    \( V = U\perp U^\perp \). 

    In either case \( U \) has an O'Meara decomposition
    of the required form
    with respect to \( h' \) and by induction
    \( U^\perp \) has an O'Meara decomposition
    of the required form
    with respect to
    \( h' \). Together these yield the required
    decomposition for \( h \).
\end{proof}

Next we consider the structure of the space \( W(i)\) in more detail.
By Theorem \ref{thm:dvrcanonicaldecomp} we have
\begin{equation}
\label{dec}
V=U_0\perp U_1\perp U_2\perp\cdots\cdots\perp U_\infty
\end{equation}
where $U_i=0$ for all but finitely many $i$ and, for $U_i\neq 0$, the Gram matrix of a basis, say~$B_i$, of $U_i$ is equal to $y^i M_i$ with $M_i$ invertible. Thus, if $B$ is the union of all $B_i$ then $B$ is a basis of $V$ with Gram matrix
\begin{equation}
\label{dec2}
M=M_0\oplus y^1 M_1\oplus y^2M_2\oplus\cdots\cdots \oplus y^\infty M_\infty,
\end{equation}
where almost all summands have size 0.

\begin{lem}\label{prop:spanningset} For every nonnegative integer $i$, we have
$$
V(i)=\mathfrak r^i U_0\perp \mathfrak r^{i-1} U_1\perp \mathfrak r^{i-2} U_2\perp\cdots\perp U_i\perp  U_{i+1}\perp U_{i+2}\perp\cdots\perp U_\infty.
$$
\end{lem}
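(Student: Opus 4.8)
The plan is to establish the claimed equality by proving the two inclusions separately, in each case exploiting the orthogonal decomposition (\ref{dec}) to reduce the evaluation of $h$ to a block-by-block computation. Two facts underpin everything. First, $h(U_j,U_j)\subset\mathfrak r^j$: this is immediate from the fact that the Gram matrix of the basis $B_j$ of $U_j$ is $y^jM_j$, and hence has all entries in $\mathfrak r^j$. Second, orthogonality of the decomposition means that if $v=\sum_j v_j$ and $u=\sum_j u_j$ with $u_j,v_j\in U_j$, then $h(u,v)=\sum_j h(u_j,v_j)$. Write $W$ for the right hand side of the asserted identity, so that the $U_j$-coefficient of $W$ is $\mathfrak r^{i-j}$ when $j\leq i$ and is all of $U_j$ (equivalently $\mathfrak r^0=A$) when $j>i$, including $j=\infty$.

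For the inclusion $W\subseteq V(i)$, I would take $v=\sum_j v_j\in W$ and an arbitrary $u=\sum_j u_j\in V$, and bound each term $h(u_j,v_j)$. When $j\leq i$ we have $v_j=y^{i-j}v_j'$ for some $v_j'\in U_j$, so by $A$-linearity in the second variable together with $h(U_j,U_j)\subset\mathfrak r^j$ we get $h(u_j,v_j)=y^{i-j}h(u_j,v_j')\in y^{i-j}\mathfrak r^j=\mathfrak r^i$. When $j>i$ (including $j=\infty$, where $h$ vanishes identically on $U_\infty$) we have $h(u_j,v_j)\in\mathfrak r^j\subseteq\mathfrak r^i$ directly. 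Summing over $j$ gives $h(u,v)\in\mathfrak r^i$, so $v\in V(i)$.

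For the reverse inclusion $V(i)\subseteq W$, I would take $v=\sum_j v_j\in V(i)$ and show $v_j\in\mathfrak r^{i-j}U_j$ for each $j\leq i$ (for $j>i$ there is nothing to prove). Restricting the defining condition $h(V,v)\subset\mathfrak r^i$ to test vectors $u\in U_j$ yields, by orthogonality, that $h(U_j,v_j)\subset\mathfrak r^i$. Expressing $v_j$ in coordinates $\mathbf c$ with respect to $B_j$ and letting $u$ range over $B_j$, this condition becomes that every entry of $y^jM_j\mathbf c$ lies in $\mathfrak r^i$. Here the discrete valuation ring hypothesis enters decisively: since $A$ is a domain the factor $y^j$ may be cancelled, so every entry of $M_j\mathbf c$ lies in $y^{i-j}A$; and since $M_j$ is invertible over $A$, multiplying by $M_j^{-1}$ shows that every entry of $\mathbf c$ lies in $\mathfrak r^{i-j}$, whence $v_j\in\mathfrak r^{i-j}U_j$ as required.

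The argument is essentially routine once the block decomposition is in place; the step I would flag as the crux is the coordinate computation in the second inclusion, where the invertibility of $M_j$ together with the domain property of $A$ is exactly what permits the passage from $y^jM_j\mathbf c\in(\mathfrak r^i)^{\oplus}$ to $\mathbf c\in(\mathfrak r^{i-j})^{\oplus}$. By contrast, the summands $U_j$ with $j>i$, and the radical summand $U_\infty$, require no argument, since they occur in $W$ with full coefficient and, in the case of $U_\infty$, carry the zero form.
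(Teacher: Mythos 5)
Your proof is correct and follows essentially the same route as the paper's: both reduce the computation of $V(i)$ block-by-block via the orthogonal decomposition (\ref{dec}). The coordinate computation you flag as the crux --- cancelling $y^j$ and applying $M_j^{-1}$ --- is precisely the content of Lemma \ref{lem:ri} applied to the nondegenerate form $y^{-j}h|_{U_j}$, which is what the paper cites instead of reproving inline.
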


\begin{proof} The operation of passing from $V$ to $V(i)$ is compatible with orthogonal decompositions, so the result follows
immediately from Lemma \ref{lem:ri} and the decomposition (\ref{dec}).
\end{proof}

\begin{cor}  For every nonnegative integer $i$, we have $\dim_{A/\mathfrak r} W(i)=\mathrm{rank}_{A}\, U_i$.
\end{cor}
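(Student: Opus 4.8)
The plan is to read off the relevant submodules from Lemma \ref{prop:spanningset} and then compute the quotient $W(i) = V(i)/(\mathfrak r V(i-1) + V(i+1))$ one summand at a time with respect to the orthogonal decomposition (\ref{dec}).

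First I would use Lemma \ref{prop:spanningset} to record the $U_j$-summands of $V(i)$, $V(i+1)$ and $V(i-1)$: the $U_j$-summand of $V(i)$ is $\mathfrak r^{i-j}U_j$ when $j\le i$ and is all of $U_j$ when $j>i$, with the convention $V(-1)=0$ taking care of $i=0$. Multiplying the description of $V(i-1)$ by $\mathfrak r$ and adding $V(i+1)$, I would then identify the $U_j$-summand of $\mathfrak r V(i-1)+V(i+1)$ for each $j$. The key observation is that this sum is again aligned with (\ref{dec}) and that its $U_j$-summand agrees with that of $V(i)$ for every $j\ne i$: for $j<i$ both equal $\mathfrak r^{i-j}U_j$ (using $\mathfrak r^{i+1-j}\subset\mathfrak r^{i-j}$ to absorb the $V(i+1)$ contribution), and for $j>i$ both equal $U_j$. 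Only in the slot $j=i$ do they differ, $V(i)$ contributing $U_i$ while $\mathfrak r V(i-1)+V(i+1)$ contributes just $\mathfrak r U_i$.

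Since both $V(i)$ and $\mathfrak r V(i-1)+V(i+1)$ are thereby written as orthogonal direct sums $\bigoplus_j N_j$ and $\bigoplus_j P_j$ with $P_j\subset N_j\subset U_j$ and $P_j=N_j$ for all $j\ne i$, the quotient collapses to the single nonzero term
\[ W(i)=V(i)/(\mathfrak r V(i-1)+V(i+1))\cong U_i/\mathfrak r U_i. \]
As $U_i$ is a free $A$-module, $U_i/\mathfrak r U_i$ is an $A/\mathfrak r$-vector space of dimension $\mathrm{rank}_A\,U_i$, which gives the claim.

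I expect no genuine obstacle here; the argument is pure bookkeeping of the powers of $\mathfrak r$ attached to each $U_j$. The only point deserving a little care is the boundary case $i=0$: there $V(-1)=0$, so the $\mathfrak r U_i=\mathfrak r U_0$ appearing in the denominator must come from the $U_0$-summand of $V(1)$ rather than from $\mathfrak r V(i-1)$. Once one notes that $V(i+1)$ always supplies this $\mathfrak r U_i$ term, the uniform computation above applies without change.
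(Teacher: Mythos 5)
Your argument is correct and is exactly the computation the paper intends: the corollary is stated without proof as an immediate consequence of Lemma \ref{prop:spanningset}, and your summand-by-summand bookkeeping showing $W(i)\cong U_i/\mathfrak r U_i$ (including the $i=0$ boundary case) is the implied justification.
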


Making use of the uniformiser \( y \)
for \( A \), we may alter the above map $W(i)\times W(i)\to \mathfrak r^i/\mathfrak r^{i+1}$ and
define an \( A/\mathfrak r \)-valued
form \( h_i: W(i) \times W(i) \rightarrow A/\mathfrak r \) by
$$ h_i(u+\mathfrak r V(i-1)+V(i+1),v+\mathfrak r V(i-1)+V(i+1)) = y^{-i}h(u,v)+\mathfrak r,\quad u,v\in V(i).$$
Note that \( h_i \) is \( \varepsilon \)-hermitian if
\( * \) is trivial and is \( \varepsilon(-1)^i \)-hermitian
if \( * \) is nontrivial.

\begin{cor}\label{cor:matrix}  For every nonnegative integer $i$, the form $h_i$ is nondegenerate. In fact,
the Gram matrix of the basis $\overline{B_i}$ of $W(i)$ relative to $h_i$ is $\overline{M_i}$, which is invertible
or has size 0.
\end{cor}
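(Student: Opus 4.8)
The plan is to reduce both assertions to a one-line computation with the explicit decomposition (\ref{dec}) and the basis $B_i$ of $U_i$. Once we know that the image $\overline{B_i}$ is genuinely an $A/\mathfrak r$-basis of $W(i)$ and that $h_i$ is computed on these representatives by its defining formula, both the shape of the Gram matrix and nondegeneracy follow at once.

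First I would identify $W(i)$ explicitly. Feeding $V(i-1)$, $V(i)$, $V(i+1)$ from Lemma \ref{prop:spanningset} into the quotient and using $\mathfrak r\cdot\mathfrak r^{j}=\mathfrak r^{j+1}$ summand by summand, one finds
$$\mathfrak r V(i-1)+V(i+1)=\mathfrak r^{i}U_0\perp\mathfrak r^{i-1}U_1\perp\cdots\perp\mathfrak r U_{i-1}\perp \mathfrak r U_i\perp U_{i+1}\perp\cdots\perp U_\infty.$$
Comparing term by term with the expression for $V(i)$, every summand coincides except the $U_i$ slot, which contributes $U_i$ to the numerator and $\mathfrak r U_i$ to the denominator. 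Thus the projection $V(i)\to W(i)$ induces an isomorphism $U_i/\mathfrak r U_i\cong W(i)$, so $\overline{B_i}$ is a basis of $W(i)$; in particular this recovers the preceding corollary.

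Next I would read off the Gram matrix. Writing $B_i=\{e_1,\dots,e_r\}$, decomposition (\ref{dec2}) gives $h(e_j,e_k)=y^{i}(M_i)_{jk}$, and since each $e_j\in U_i\subset V(i)$ the definition of $h_i$ yields
$$h_i(\overline{e_j},\overline{e_k})=y^{-i}h(e_j,e_k)+\mathfrak r=(M_i)_{jk}+\mathfrak r=\overline{(M_i)_{jk}}.$$
Hence the Gram matrix of $\overline{B_i}$ relative to $h_i$ is $\overline{M_i}$. Since $M_i\in GL_r(A)$ by Theorem \ref{thm:dvrcanonicaldecomp}, its reduction $\overline{M_i}$ lies in $GL_r(A/\mathfrak r)$ (or has size $0$), and by the very definition of nondegeneracy this makes $h_i$ nondegenerate.

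The argument is essentially pure bookkeeping, so the only point genuinely demanding care is the well-definedness of $h_i$, namely that $y^{-i}h(u,v)+\mathfrak r$ is independent of the choice of representatives in $V(i)$. This reduces to $h\big(V(i),\,\mathfrak r V(i-1)+V(i+1)\big)\subset\mathfrak r^{i+1}$: for $w\in V(i+1)$ it is immediate, while for $w=y w'$ with $w'\in V(i-1)$ one uses that $u\in V(i)$ forces $h(w',u)\in\mathfrak r^{i}$, hence $h(u,w')=\varepsilon h(w',u)^*\in\mathfrak r^{i}$ by $*$-invariance of $\mathfrak r^{i}$, so $h(u,w)=y\,h(u,w')\in\mathfrak r^{i+1}$. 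Since this is already built into the construction of $h_i$ preceding the corollary, in the final write-up I would simply invoke it and let the basis identification and the displayed Gram computation carry the proof.
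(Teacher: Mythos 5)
Your proof is correct and follows exactly the route the paper intends: the corollary is stated without proof as an immediate consequence of Lemma \ref{prop:spanningset} and the decompositions (\ref{dec}) and (\ref{dec2}), and your computation of $\mathfrak r V(i-1)+V(i+1)$, the resulting identification $W(i)\cong U_i/\mathfrak r U_i$, and the evaluation $h_i(\overline{e_j},\overline{e_k})=\overline{(M_i)_{jk}}$ are precisely the bookkeeping being left to the reader. Your remark on well-definedness of $h_i$ is a worthwhile addition, since the paper asserts the induced map without verifying it.
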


It is clear that if \( h \) and \( h' \) are equivalent
forms on \( V \), then \( h_i \) and \( h'_i \) are
equivalent over \( A/\mathfrak r \)
for \( i \geq 0 \).
The converse is, of course, not necessarily true
(see Example \ref{exa:Rachel}).

\begin{thm}
    \label{thm:henselforformsoverdvr}
    Suppose that \( A \) is complete
    and
    that \( h \) and \( h' \) are \( \varepsilon \)-hermitian
    forms on \( V \).
    If,
    for each nonnegative integer \( i \),
    the forms \( h_i \) and \( h'_i \) are equivalent
    over \( A/\mathfrak r \) then \( h \) and \( h' \)
    are equivalent.
\end{thm}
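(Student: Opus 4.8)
The plan is to reduce the equivalence of $h$ and $h'$ to the family of residue-field equivalences of the nondegenerate forms $h_i, h_i'$, using the O'Meara decompositions provided by Theorem \ref{thm:dvrcanonicaldecomp} and then assembling a block-diagonal $*$-congruence one graded piece at a time.

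First I would fix O'Meara decompositions of both forms as in (\ref{dec}) and (\ref{dec2}). Thus I choose bases giving Gram matrices $M = \bigoplus_i y^i M_i$ for $h$ and, analogously, $M' = \bigoplus_i y^i M_i'$ for $h'$, where each $M_i$ and each $M_i'$ is invertible (or of size zero). By Corollary \ref{cor:matrix} the form $h_i$ is represented by $\overline{M_i}$ and $h_i'$ by $\overline{M_i'}$. Hence the hypothesis that $h_i$ and $h_i'$ are equivalent over $A/\mathfrak r$ says exactly that $\overline{M_i}$ and $\overline{M_i'}$ are $*$-congruent over $A/\mathfrak r$; in particular the block sizes must agree, so $\mathrm{rank}_A\, U_i = \mathrm{rank}_A\, U_i'$ for every $i$.

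The key step is to lift each of these residual congruences from $A/\mathfrak r$ to $A$. Both $M_i$ and $M_i'$ are invertible, and by the parity observation recorded just before Corollary \ref{cor:matrix} they are $\varepsilon(-1)^i$-hermitian matrices (or $\varepsilon$-hermitian when $*$ is trivial) of the \emph{same} type. Since $A$ is complete, the matrix version of Theorem \ref{thm:henselforforms}, applied with $\varepsilon$ replaced by $\varepsilon(-1)^i$, produces $X_i \in GL(A)$ with $X_i'^* M_i X_i = M_i'$. Multiplying through by the scalar $y^i$ gives $X_i'^*(y^i M_i)X_i = y^i M_i'$, so the $i$-th graded blocks of $M$ and $M'$ are $*$-congruent via the same $X_i$.

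Finally I would set $X = \bigoplus_i X_i$, a block-diagonal element of $GL_m(A)$, and observe that $X'^* M X = M'$, whence $M$ and $M'$ are $*$-congruent and $h$ and $h'$ are equivalent. The only substantive input is the lifting in the previous paragraph; the point requiring care is simply to invoke Theorem \ref{thm:henselforforms} with the correct sign $\varepsilon(-1)^i$ for each index $i$, and to note that the equality of block sizes is already forced by the residue-field equivalences. Everything else is routine bookkeeping with orthogonal direct sums, so I do not anticipate a genuine obstacle here once the graded machinery of Corollary \ref{cor:matrix} and the completeness result of Theorem \ref{thm:henselforforms} are in hand.
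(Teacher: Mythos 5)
Your proposal is correct and follows essentially the same route as the paper, which proves this theorem as an immediate consequence of the matrix version of Theorem \ref{thm:henselforforms}, Corollary \ref{cor:matrix} and the decomposition (\ref{dec2}). You have simply written out the blockwise lifting and reassembly (including the sign bookkeeping $\varepsilon(-1)^i$) that the paper leaves implicit.
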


\begin{proof}  This is an immediate consequence of Theorem \ref{thm:henselforforms} (in its matrix version), Corollary~\ref{cor:matrix}
and the decomposition (\ref{dec2}).
\end{proof}

We next obtain the following classification theorem for \( * \)-congruence
classes of \( \varepsilon \)-hermitian matrices

\begin{thm}
    \label{thm:matricesovercompleteDVR}
    Assume that \( A \) is complete.
    \begin{enumerate}
        \item Suppose that \( M \in M_m(A)\) is \( \varepsilon \)-hermitian.
            Then \( M \) is \( * \)-congruent to a matrix of the form
            \( \bigoplus_{i=0}^\infty y^i M_i \),
            where for each
            \( M_i \) is either of size zero or is an invertible
            matrix. Moreover, for $0\leq i<\infty$,
            \begin{enumerate}
                \item if \( * \) is unramified then \( M_i \)
                    is a diagonal matrix.
                \item if \( * \) is ramified and \( (y^i)^* = \varepsilon y^i \)
                    then \( M_i \) is diagonal. Whereas if \( * \) is ramified and \( (y^i)^* = -\varepsilon y^i \)
                    then \( M_i \) is a direct sum of
                    copies of \( \begin{pmatrix} 0 & 1 \\ -1 & 0 \end{pmatrix} \).
            \end{enumerate}
        \item Given \( \varepsilon \)-hermitian matrices
            \( M = \bigoplus_{i=0}^\infty y^i M_i \) and
            \( N = \bigoplus_{i=0}^\infty y^i N_i \) in $M_m(A)$ such that each
            \( M_i\) (resp. $N_i$) is either of size zero or is an invertible
            matrix, then
            \begin{enumerate}
                \item if \( * \) is unramified,
                    \( M \) and \( N \) are \( * \)-congruent over \( A \) if and only if
                    for each \( 0 \leq i<\infty \),
                    \( \overline{ M_i}\) is \( * \)-congruent to
                    \( \overline{ N_i} \) over \( A/\mathfrak r \).
                \item if \( * \) is ramified,
                    \( M \) and \( N \) are \( * \)-congruent over \( A \) if and only if
                    for each \( 0\leq i<\infty \) such that  \( (y^i)^* = \varepsilon y^i \),
                    \( \overline{M_i}\) is \( * \)-congruent to
                    \( \overline{N_i} \) over \( A/\mathfrak r \),
                    and for each \(0\leq i<\infty \) such that  \( (y^i)^* = -\varepsilon y^i \),
                    the size of \( M_i \) is equal to the size of~\( N_i \).

            \end{enumerate}
    \end{enumerate}
\end{thm}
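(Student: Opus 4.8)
The plan is to treat existence (Part 1) and classification (Part 2) separately, in each case reducing to results already established for nondegenerate forms together with the reduction machinery of Theorem \ref{thm:henselforformsoverdvr}.

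For Part 1, I would begin with the O'Meara decomposition supplied by Theorem \ref{thm:dvrcanonicaldecomp}: after a \( * \)-congruence we may assume \( M = \bigoplus_i y^i M_i \) with each \( M_i \) invertible, corresponding to an orthogonal decomposition \( V = U_0 \perp U_1 \perp \cdots \) in which the Gram matrix of \( U_i \) is \( y^i M_i \). The refinement of the individual blocks comes from examining, on each \( U_i \), the scaled form \( h^{(i)} = y^{-i} h|_{U_i} \), whose Gram matrix is the invertible matrix \( M_i \); thus \( h^{(i)} \) is nondegenerate. A short computation using \( h(v,u) = \varepsilon h(u,v)^* \) shows that \( h^{(i)} \) is \( \varepsilon \)-hermitian when \( y^* = y \) and \( \varepsilon(-1)^i \)-hermitian when \( y^* = -y \). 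I would then apply Theorem \ref{thm:canonicalformnondegenerate}: in the unramified case, or whenever \( h^{(i)} \) is hermitian, \( U_i \) admits a basis with diagonal Gram matrix, giving \( M_i \) diagonal; when \( * \) is ramified and \( h^{(i)} \) is skew hermitian, Proposition \ref{prop:sympbasis} (valid since \( A \) is complete) furnishes a symplectic basis of \( U_i \), giving \( M_i \) as a direct sum of blocks \( \begin{pmatrix} 0 & 1 \\ -1 & 0 \end{pmatrix} \).

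The link to the stated hypotheses is pure sign bookkeeping, which I expect to be the most error-prone step. Recall that \( y^* = y \) exactly when \( * \) is trivial and \( y^* = -y \) when \( * \) is nontrivial (Lemma \ref{lem:nontrivialimpliesskewgenerator}), so \( (y^i)^* = \varepsilon y^i \) holds precisely when the sign of \( h^{(i)} \) computed above equals \( +1 \), that is, precisely when \( h^{(i)} \) is hermitian; the complementary condition \( (y^i)^* = -\varepsilon y^i \) then matches the skew hermitian case. Verifying this equivalence across the three situations (trivial, ramified nontrivial, unramified) completes Part 1.

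For Part 2, the essential input is Theorem \ref{thm:henselforformsoverdvr} together with the remark preceding it, which show that \( M \) and \( N \) are \( * \)-congruent over the complete discrete valuation ring \( A \) if and only if \( \overline{M_i} \) and \( \overline{N_i} \) are \( * \)-congruent over \( A/\mathfrak r \) for every finite \( i \); here I invoke Corollary \ref{cor:matrix}, which identifies \( \overline{M_i} \) with the Gram matrix of the residue form at level \( i \) associated to \( h_M \). In the unramified case this is already the stated criterion. In the ramified case the indices split according to the sign of \( (y^i)^* \): for \( (y^i)^* = \varepsilon y^i \) the residue form is symmetric and the criterion remains \( * \)-congruence of \( \overline{M_i} \) and \( \overline{N_i} \); for \( (y^i)^* = -\varepsilon y^i \) the residue form is a nondegenerate skew-symmetric form over the field \( A/\mathfrak r \) (the induced involution being trivial), so by the classical fact that such forms are classified up to congruence by their rank alone, the condition collapses to equality of the sizes of \( M_i \) and \( N_i \). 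Finally, since \( M \) and \( N \) have the same size \( m \) and the finite blocks agree in size, the radical blocks indexed by \( \infty \) automatically match, so no separate condition is needed there.
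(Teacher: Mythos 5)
Your proposal is correct and follows essentially the same route as the paper, whose proof is a one-line appeal to Theorem \ref{thm:dvrcanonicaldecomp}, Theorem \ref{thm:henselforforms} and Lemma \ref{lem:completenessimplessymplecticpairs}; your version merely expands those citations (via Theorem \ref{thm:canonicalformnondegenerate}, Proposition \ref{prop:sympbasis}, Corollary \ref{cor:matrix} and Theorem \ref{thm:henselforformsoverdvr}) and carries out the sign bookkeeping explicitly, all of which checks out.
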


\begin{proof} This follows immediately from Theorem \ref{thm:henselforforms}, Theorem \ref{thm:dvrcanonicaldecomp} and
Lemma \ref{lem:completenessimplessymplecticpairs}.
\end{proof}

Thus, for matrices over a complete
discrete valuation ring with residue field of characteristic not 2, the \( * \)-congruence problem essentially reduces to the
\( * \)-congruence problem over its residue field.

\begin{exa} (cf. Example 1.1 of \cite{LO}) {\rm Given any field $F$, consider the complete, local, nonprincipal domain $A=F[[X,Y]]$ and the $*$-hermitian (with trivial $*$) matrix
$$
M=\left(
    \begin{array}{cc}
      0 & X \\
      X & Y \\
    \end{array}
  \right).
$$
We claim that $M$ has no O'Meara decomposition. Suppose, to the contrary, that it does. Since $M$ is
not invertible, then $M$ must be $*$-congruent to a diagonal matrix $D=\mathrm{diag}(a,b)$.
Thus $D=T'MT$ for some $T\in\mathrm{GL}_2(A)$. Taking determinants yields $ab=-t^2X^2$, where $t\in U(A)$. Since $(X)$ is a prime ideal,
it follows that $a\in (X)$ or $b\in (X)$. Suppose, without loss of generality, that $a=cX$ for some $c\in A$. Cancelling, we obtain
$cb=-t^2X$. If $b\notin (X)$ then a repetition of the preceding argument yields that $b\in U(A)$, against the fact that
the hermitian form $h=h_M$ takes no unit values. We are thus forced to conclude that $b=dX$ for some $d\in A$, which implies
that $c,d\in U(A)$. Now, since $M$ is congruent to $D$, there is $u\in V=A^2$ such that $h(u,u)=cX$. Let $e_1,e_2$ be the canonical
basis of $V$. Then $u=f e_1+g e_2$ for some $f,g\in A$, whence $2fgX+g^2Y=cX$, that is, $g^2Y=(c-2fg)X$. We infer that $g\in (X)$
and a fortiori $c-2fg\in U(A)$. Thus $(c-2fg)X\in (Y)$ but neither $(c-2fg)$ nor $X$ are in $(Y)$, which contradicts the fact that $(Y)$ is a prime ideal.
}
\end{exa}

The following proposition is not needed for the sequel. However, we
include it to shed light on the structure of the rings under
consideration.

\begin{prop} Suppose \(*\) is a nontrivial involution of \(A\).
Then \(R\) is a discrete valuation ring and $A=R[z]=R\oplus Rz$, where $z^*=-z$ and $z^2=x\in R$. Moreover,
$x\in U(R)$ if $*$ is unramified and $x\in\mathfrak m$ otherwise. In either case, $x$ is not a square in $R$.
\end{prop}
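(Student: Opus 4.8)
The plan is to build everything from the skew uniformiser supplied by Lemma~\ref{lem:nontrivialimpliesskewgenerator} and the decomposition $A=R\oplus S$. Since $*$ is nontrivial we have $S\neq 0$ (for any $a$ with $a^*\neq a$, the element $a-a^*$ is a nonzero member of $S$). Writing $v$ for the valuation of the discrete valuation ring $A$, I would first choose $z\in S\setminus\{0\}$ of minimal valuation and show $S=Rz$: for $s\in S$ the quotient $s/z$ satisfies $(s/z)^*=(-s)/(-z)=s/z$ and lies in $A$ because $v(s)\geq v(z)$, hence $s/z\in R$ and $s\in Rz$. As $(z^2)^*=z^2$ we may set $x=z^2\in R$, and then $R[z]=R\oplus Rz=R\oplus S=A$, the sum being direct because $R\cap S=0$ (a consequence of $2\in U(A)$). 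This disposes of the presentation $A=R[z]=R\oplus Rz$ with $z^*=-z$ and $z^2=x\in R$.

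The next step is to read off the nature of $x$ from the ramified/unramified dichotomy via $v(z)$. If $*$ is unramified then $S$ contains a unit, so the minimal valuation in $S$ is $0$; thus $z$ is a unit and $x=z^2\in U(R)$. If $*$ is ramified then $S\subset\mathfrak r$ forces $v(z)\geq 1$, while the skew uniformiser $y\in S$ has $v(y)=1$, so $v(z)=1$ and $x=z^2$ has valuation $2$, giving $x\in R\cap\mathfrak r=\mathfrak m$.

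For the claim that $R$ is a discrete valuation ring, I would restrict $v$ to $R$. In the unramified case $yz\in R$ has $v(yz)=1$, so $v$ attains every nonnegative integer on $R$ and $\mathfrak m=Ryz$ is principal. In the ramified case the identity $(-1)^ku^*=u$, obtained by writing a nonzero $r\in R$ as $uy^k$ with $u\in U(A)$, shows that odd $k$ is impossible (it would place the unit $u$ in $S\subset\mathfrak r$); hence the value semigroup of $R$ is $2\mathbb Z_{\geq0}$ and $x$ generates $\mathfrak m$. In either case the relevant rescaling of $v$ is a discrete valuation whose valuation ring is $R$, since $\mathrm{Frac}(R)$ is $*$-fixed and therefore $\mathrm{Frac}(R)\cap A=R$; thus $R$ is a discrete valuation ring.

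Finally, to see that $x$ is not a square in $R$, suppose $x=r^2$ with $r\in R$. Then $(z-r)(z+r)=0$ in the domain $A$, forcing $z=\pm r\in R\cap S=0$, which contradicts $z\neq 0$. I expect the main obstacle to be this second step together with the verification $S=Rz$: the single observation about the minimal valuation of a skew element simultaneously governs whether $x$ is a unit or lies in $\mathfrak m$ and whether $R$ admits elements of odd valuation, so getting this dichotomy exactly right is what the whole argument turns on.
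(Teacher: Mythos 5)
Your proposal is correct and follows essentially the same route as the paper: both rest on the skew uniformiser from Lemma~\ref{lem:nontrivialimpliesskewgenerator}, the decomposition $A=R\oplus S$, and the observation that dividing a fixed (resp.\ skew) element by a skew generator lands back in $R$, with the ramified/unramified dichotomy governing whether the generator of $S$ is a unit. Your uniform choice of $z$ as a minimal-valuation element of $S$ and the explicit factorisation $(z-r)(z+r)=0$ for the non-square claim are minor repackagings of what the paper does case by case.
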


\begin{proof} By Lemma
    \ref{lem:nontrivialimpliesskewgenerator} we can
    choose \(y \in S \cap (\mathfrak r - \mathfrak r^2)\).

    Suppose first that \(S \subset \mathfrak r\).
    Since \( \mathfrak m = \mathfrak r \cap R\),
    we know that if
    \( b \in \mathfrak m\) then
    \( b = cy\) for some \( c \in A\). Clearly, since \( b \in R\) and
    \( y \in S\), we must have \( c \in S \subset \mathfrak r\).
    Therefore \( c \in Ay\) and we conclude that \( b \in R y^2\).
    So in this case \( \mathfrak m = Ry^2\).  Since $2\in U(A)$,
    we have $A=R\oplus S$. If $s\in S$ then $sy\in \mathfrak r \cap R=\mathfrak m$,
    whence $sy=ry^2$ for some $r\in R$, so $s=ry$ and a fortiori $S=Ry$.
    Moreover, in this case, let \(x = y^2\) and observe that \( x\)
    cannot be a square of any element of \(R\).

    On the other hand, if \( S \not\subset \mathfrak r\)
    then we may choose \( w \in U(A)\) such that \(w^* = -w\).
    Then \(z = wy \in R \cap (\mathfrak r - \mathfrak r^2)\)
    and it is clear that \( \mathfrak m = Rz\) in this case.
    If $t\in S$ then $tw^{-1}\in R$, so $t\in Rw$, which gives $S=Rw$.
    Now let \(x = w^2 \in R\) and once again, one readily
    checks that $x$ is not a square in $R$.

    In both cases $R$ is a local domain with principal maximal ideal, so every ideal is principal.
    \end{proof}

\section{Invariants and normal forms}

We are finally in a position to show that, after imposing additional hypotheses, the sequence $d_i=\dim_{A/\mathfrak r} W(i)$, $i\geq 0$,
is a complete set of invariants for equivalence
classes of \( \varepsilon \)-hermitian forms.

Let $B$ stand for the fixed field of the involution that $*$ induces on $A/\mathfrak r$. Then $B=A/\mathfrak r$ if $*$ is ramified
and $B=R/\mathfrak m$ (viewed as a subfield of $A/\mathfrak r$) if $*$ is unramified. In either case, we have a norm map $\mathcal N:A/\mathfrak r\to B$ given by $a+\mathfrak r\to aa^*+\mathfrak r$. One of the aforementioned hypotheses is that $\mathcal N$ be surjective.

\begin{thm}\label{lindo}
    Suppose that \( A \) is complete and $\mathcal N$ is surjective.
    Let \( V \) and \( V' \) be free \( A \)-modules, both of
    rank \( m \), and let
    \( h:V\times V\to A\) and \(h:V'\times V'\to A \) be
    \( \varepsilon \)-hermitian forms.
    Then \( h \) and \( h' \) are equivalent
    if and only if $d_i=d'_i$ for all $i\geq 0$.
\end{thm}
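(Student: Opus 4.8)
The plan is to prove the two directions separately, with the reverse implication being the substantial one. For the forward direction, if \( \varphi : V \to V' \) is an equivalence then \( \varphi \) carries each \( V(i) \) onto \( V'(i) \) and \( \mathfrak r V(i-1)+V(i+1) \) onto the corresponding submodule of \( V' \), so \( \varphi \) induces \( A/\mathfrak r \)-linear isomorphisms \( W(i) \to W'(i) \); hence \( d_i = d_i' \) for all \( i \geq 0 \).

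For the reverse direction, suppose \( d_i = d_i' \) for all \( i \). By Theorem \ref{thm:henselforformsoverdvr} it suffices to show that \( h_i \) and \( h_i' \) are equivalent over the residue field \( k = A/\mathfrak r \) for every \( i \). By Corollary \ref{cor:matrix} both \( h_i \) and \( h_i' \) are nondegenerate forms on the \( k \)-spaces \( W(i), W'(i) \), which have equal dimension \( d_i = d_i' \). Moreover each is \( \delta_i \)-hermitian over \( k \), where \( \delta_i = \varepsilon \) if \( * \) is trivial and \( \delta_i = \varepsilon(-1)^i \) if \( * \) is nontrivial. Thus everything reduces to the following field-theoretic claim: if \( \mathcal N \) is surjective, then any two nondegenerate \( \delta \)-hermitian forms of equal dimension over \( k \) are equivalent.

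To establish the claim I would split according to whether the involution induced on \( k \) is trivial. If \( * \) is ramified the induced involution on \( k \) is trivial, \( B = k \), and \( \mathcal N(a) = a^2 \); surjectivity of \( \mathcal N \) then says every element of \( k \) is a square, i.e. \( k \) is quadratically closed. Here \( h_i \) and \( h_i' \) are symmetric or alternating forms over \( k \): alternating forms of a given rank over any field are all equivalent to a sum of hyperbolic planes, while symmetric forms over a quadratically closed field of characteristic not \( 2 \) diagonalise and every diagonal entry scales to \( 1 \), so rank is a complete invariant. If \( * \) is unramified the induced involution on \( k \) is nontrivial with fixed field \( B \), and \( k/B \) is quadratic. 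A skew hermitian form becomes hermitian after multiplication by a fixed skew unit of \( S \), so it suffices to treat the hermitian case; there one diagonalises with diagonal entries in \( B^\times \), and since surjectivity of \( \mathcal N \) gives \( \mathcal N(k^\times) = B^\times \), each diagonal entry is a norm and can be scaled to \( 1 \). In every case rank alone determines the equivalence class, proving the claim.

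The genuinely new ingredient is the combination of Theorem \ref{thm:henselforformsoverdvr} with Corollary \ref{cor:matrix}, which transports the problem over the complete discrete valuation ring to a family of problems over the residue field indexed by \( i \); the field-level classifications are then classical. The main point to get right is the bookkeeping of the type \( \delta_i \) of each residual form and the verification, in each of the four cases, that surjectivity of \( \mathcal N \) is exactly what collapses the discriminant (or square-class) invariant, so that dimension alone suffices.
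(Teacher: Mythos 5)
Your proposal is correct and follows essentially the same route as the paper, which simply cites Corollary \ref{cor:matrix}, Theorem \ref{thm:henselforformsoverdvr} and the surjectivity of $\mathcal N$; you have merely supplied the residue-field classification details (rank as a complete invariant for nondegenerate symmetric, alternating and hermitian forms when $\mathcal N$ is onto) that the paper leaves implicit. No gaps.
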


\begin{proof} This follows from Corollary \ref{cor:matrix} and Theorem
    \ref{thm:henselforformsoverdvr} using the fact that $\mathcal N$ is surjective.
\end{proof}

We may use Theorem \ref{lindo} to obtain normal forms for \( \varepsilon \)-hermitian matrices.

\begin{thm}\label{lindo2}
    Suppose that \( A \) is complete and $\mathcal N$ is surjective.
    \begin{enumerate}
    \item If $*$ is nontrivial, let $M\in M_m(A)$ be skew hermitian (resp. hermitian).
    Then $M$ is $*$-congruent to one and only one matrix of
        the form $\bigoplus_{i=0}^\infty y^i M_i$, where every $M_i$ of size $>0$ is equal to
        the direct sum of copies of \( \begin{pmatrix} 0 & 1 \\ -1 & 0 \end{pmatrix} \) if $i$ is even (resp. odd) and
        equal to the identity matrix if $i$ is odd (resp. even).
    \item If $*$ is trivial, let $M\in M_m(A)$ be skew symmetric (resp. symmetric).
    Then $M$ is $*$-congruent to one and only one matrix of
        the form $\bigoplus_{i=0}^\infty y^i M_i$, where every $M_i$ of size $>0$ is equal to 
        the direct sum of copies of \( \begin{pmatrix} 0 & 1 \\ -1 & 0 \end{pmatrix} \) (resp.
        equal to the identity matrix).
     \end{enumerate}
\end{thm}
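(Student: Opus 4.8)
The plan is to obtain existence from Theorem \ref{thm:matricesovercompleteDVR} refined by the surjectivity of the norm map, and to obtain uniqueness from Theorem \ref{lindo}. In both parts the shape of each block is dictated by the parity of $i$, because $(y^i)^*=(-1)^iy^i$ when $*$ is nontrivial and $(y^i)^*=y^i$ when $*$ is trivial; consequently, in the decomposition $M=\bigoplus_{i\geq0}y^iM_i$ of an $\varepsilon$-hermitian matrix, the summand $y^iM_i$ is again $\varepsilon$-hermitian and forces $M_i$ to be $(-1)^i\varepsilon$-hermitian (respectively $\varepsilon$-hermitian in the trivial case).

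For existence I would first apply Theorem \ref{thm:matricesovercompleteDVR} to write $M$ as $*$-congruent to $\bigoplus_{i\geq0}y^iM_i$ with each nonzero $M_i$ invertible. Reading off the two alternatives $(y^i)^*=\pm\varepsilon y^i$ in that theorem, one sees that in exactly the parities asserted in the statement $M_i$ is already a direct sum of copies of $\left(\begin{smallmatrix}0&1\\-1&0\end{smallmatrix}\right)$, so these blocks require no further work. In the complementary parities $M_i$ is diagonal and hermitian (that is $(-1)^i\varepsilon=1$); to replace such a block by an identity block I would pass to the residue field, where $\overline{M_i}$ is a nondegenerate hermitian form over $A/\mathfrak r$. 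Diagonalising $\overline{M_i}$, each diagonal entry lies in $B$, and since $\mathcal N$ is surjective its inverse is the norm of a unit, so the entry can be rescaled to $1$; hence $\overline{M_i}$ is $*$-congruent to the identity over $A/\mathfrak r$. Applying the matrix version of Theorem \ref{thm:henselforforms} lifts this to a $*$-congruence over $A$, so $M_i$ itself is $*$-congruent to an identity block (equivalently, one may rescale the diagonal entries directly over $A$ via Lemma \ref{lem:henselwithinvolution}). Reassembling produces a matrix of the asserted form that is $*$-congruent to $M$.

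For uniqueness I would invoke Theorem \ref{lindo}. If $\bigoplus_iy^iM_i$ and $\bigoplus_iy^iN_i$ both have the prescribed shape and are $*$-congruent, then by Corollary \ref{cor:matrix} the invariant $d_i=\dim_{A/\mathfrak r}W(i)$ equals the size of $M_i$, and likewise of $N_i$; since $*$-congruent forms have the same $d_i$, the blocks $M_i$ and $N_i$ have equal size for every $i$. Because the prescribed shape of each block depends only on the parity of $i$, equal sizes force $M_i=N_i$ for all $i$, so the two matrices coincide. Together with existence this gives exactly one normal form in each $*$-congruence class.

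The only genuinely new step beyond Theorem \ref{thm:matricesovercompleteDVR}, and hence the main obstacle, is the normalisation of the hermitian diagonal blocks to the identity, and this is precisely where surjectivity of $\mathcal N$ is indispensable: it makes the discriminant of a hermitian form over the residue field trivial, so that rank becomes a complete invariant and every such block descends to the identity; completeness, through Theorem \ref{thm:henselforforms} (or Lemma \ref{lem:henselwithinvolution}), is what permits this residue-level normalisation to be lifted back to $A$. The remaining effort is the bookkeeping that matches each parity of $i$ to the correct block type across the hermitian versus skew-hermitian and trivial versus nontrivial cases, which is routine once the type of $M_i$ has been pinned down.
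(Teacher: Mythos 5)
Your proposal is correct and takes essentially the route the paper intends: the paper states Theorem \ref{lindo2} without proof as a consequence of its preceding machinery, namely existence of the normal form from the O'Meara decomposition of Theorem \ref{thm:matricesovercompleteDVR} together with normalisation of the invertible hermitian diagonal blocks to the identity using surjectivity of $\mathcal N$ and Hensel lifting (Lemma \ref{lem:henselwithinvolution} or the matrix version of Theorem \ref{thm:henselforforms}), and uniqueness from the completeness of the invariants $d_i$ in Theorem \ref{lindo} combined with the fact that each prescribed block is determined by its size and the parity of $i$. The one caveat is that your appeal to the symplectic-block alternative of Theorem \ref{thm:matricesovercompleteDVR}(1)(b) is available only when $*$ is ramified, so part (1) must be read with ``nontrivial'' meaning ``nontrivial and ramified'' (a $1\times 1$ invertible skew hermitian block exists when $*$ is unramified and cannot be congruent to a sum of $2\times 2$ blocks); this is evidently the paper's intent, since the unramified case is deferred to Theorem \ref{unra}, but it is worth making the restriction explicit.
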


Recall that two matrices $M,N\in M_m(A)$ are said to be equivalent if $PMQ=N$ for some $P,Q\in \mathrm{GL}_m(A)$.
Assuming that $A$ is complete and $\mathcal N$ is surjective, we may use Corollary \ref{cor:matrix} and Theorem \ref{thm:henselforformsoverdvr} to see that the problem of $*$-congruence of matrices reduces to the
problem of equivalence of matrices, whose answer is well-known. Since every invertible matrix is equivalent to the identity matrix, we have

\begin{thm}\label{lindo3}
    Suppose \( A \) is complete and $\mathcal N$ is surjective. Then two \( \varepsilon \)-hermitian matrices $M,N\in M_m(A)$
   are $*$-congruent if and only if they have the same invariant factors.
\end{thm}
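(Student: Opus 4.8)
The plan is to reduce the claim to Theorem~\ref{lindo} by recognising the invariant factors of an $\varepsilon$-hermitian matrix as a mere repackaging of the sequence $(d_i)$. The forward implication needs no special structure: if $N = X'^* M X$ with $X \in GL_m(A)$, then $N = PMQ$ with $P = X'^*$ and $Q = X$ both invertible, so $M$ and $N$ are equivalent and therefore share the same invariant factors. All the content lies in the reverse direction.

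For that direction I would first apply Theorem~\ref{thm:dvrcanonicaldecomp} together with Corollary~\ref{cor:matrix} to replace $M$, up to $*$-congruence, by a block diagonal matrix $\bigoplus_{i} y^i M_i$ in which each $M_i$ of positive size is invertible of size $d_i = \dim_{A/\mathfrak r} W(i)$. Since $*$-congruence is a particular case of equivalence, this substitution leaves the invariant factors unchanged, so it suffices to compute the invariant factors of $\bigoplus_i y^i M_i$. The essential input is that $A$ is a discrete valuation ring, whence $y$ is its unique prime up to units and the elementary divisors coincide with the invariant factors; consequently the invariant factors of a block diagonal matrix are simply the union of those of its blocks. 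Because each $M_i$ is invertible, the block $y^i M_i$ is equivalent to $y^i I_{d_i}$ and hence contributes the invariant factor $y^i$ with multiplicity exactly $d_i$ (with the convention $y^\infty = 0$ absorbing the radical). Thus the multiset of invariant factors of $M$ is precisely $y^i$ taken with multiplicity $d_i$, ranging over $0 \le i \le \infty$, and in particular $d_i$ is read off as the number of invariant factors of $M$ having $y$-adic valuation $i$.

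With this dictionary established the proof concludes at once: if $M$ and $N$ have the same invariant factors then $d_i(M) = d_i(N)$ for every $i \ge 0$, and Theorem~\ref{lindo} — which is exactly where completeness and the surjectivity of $\mathcal N$ enter — delivers the equivalence of $h_M$ and $h_N$, that is, the $*$-congruence of $M$ and $N$. The one step demanding genuine care, and thus the main obstacle, is the invariant-factor bookkeeping of the middle paragraph: one must justify that over a discrete valuation ring the invariant factors of a direct sum are the union of those of the summands (an identity that fails over a general principal ideal domain unless one first passes through elementary divisors, as the example $\mathrm{diag}(2,3)$ over $\mathbb{Z}$ shows), and that scaling an invertible block $M_i$ by $y^i$ yields exactly $d_i$ copies of $y^i$. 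Everything else is a direct appeal to the structure theory already in place.
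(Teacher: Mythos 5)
Your proposal is correct and follows essentially the same route as the paper: the paper likewise reduces $*$-congruence to equivalence of matrices via the O'Meara decomposition, Corollary~\ref{cor:matrix} and Theorem~\ref{thm:henselforformsoverdvr} (which is exactly what underlies Theorem~\ref{lindo}), noting that each invertible block $M_i$ is equivalent to the identity so the invariant factors are the $y^i$ with multiplicities $d_i$. Your middle paragraph simply makes explicit the invariant-factor bookkeeping that the paper dismisses as ``well-known.''
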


What are the possible invariant factors of an \( \varepsilon \)-hermitian matrix?
By above this amounts to asking what are the sequences
that arise as \( (d_i)_{i\geq 0} \) for some \( \varepsilon \)-hermitian
form.
Let us call such sequences
{\em \( \varepsilon \)-realisable}. The answer is an immediate consequence of~Theorem \ref{lindo2}.

\begin{prop}
    Suppose that \( A \) is complete and $\mathcal N$ is surjective.
    Let \( (d_i) \) be a sequence of nonnegative integers.
    \begin{enumerate}
        \item If \( * \) is trivial and \( \varepsilon = 1 \)
            then \( (d_i) \) is \( \varepsilon \)-realisable.
        \item If \( * \) is trivial and \( \varepsilon = -1 \)
            then \( (d_i) \) is \( \varepsilon \)-realisable if and only if
            \( d_i \) is even for all \( i \).
        \item If \( * \) nontrivial, then \( (d_i) \)
            is \( \varepsilon \)-realisable if and only if
            \( d_i\) is even
            for \( i \) such that  \( (-1)^i = -\varepsilon \).
    \end{enumerate}
\end{prop}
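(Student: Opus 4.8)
The plan is to read the realisability condition directly off the normal forms provided by Theorem \ref{lindo2}, using the identification of the invariants $d_i$ with the sizes of the normal-form blocks. Recall from Corollary \ref{cor:matrix} that for any $\varepsilon$-hermitian form $h$ with O'Meara decomposition $\bigoplus_{i\geq 0} y^i M_i$ one has $d_i=\dim_{A/\mathfrak r} W(i)=\mathrm{rank}_A\,U_i$, which is exactly the size of $M_i$. Thus a sequence $(d_i)$ is $\varepsilon$-realisable precisely when there is a family of invertible blocks $M_i$, of the types permitted by Theorem \ref{lindo2}, with $\mathrm{size}(M_i)=d_i$.

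For the "if" direction I would simply construct the form. Given a sequence $(d_i)$ satisfying the stated parity condition, I set $M=\bigoplus_{i\geq 0} y^i M_i$, where each $M_i$ is the prescribed normal-form block of size $d_i$: the identity matrix at the levels where Theorem \ref{lindo2} requires a hermitian (or symmetric) block, and a direct sum of $d_i/2$ copies of $\left(\begin{smallmatrix}0&1\\-1&0\end{smallmatrix}\right)$ at the levels where it requires a skew block. The hypothesis that $d_i$ is even at the latter levels is exactly what is needed for these blocks to exist, and it is never a constraint at the former levels. The associated form $h_M$ then has $\dim_{A/\mathfrak r} W(i)=\mathrm{size}(M_i)=d_i$ by Corollary \ref{cor:matrix}, so $(d_i)$ is realised.

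For the "only if" direction I would invoke Theorem \ref{lindo2}. Any $\varepsilon$-hermitian form is $*$-congruent to a normal form $\bigoplus y^i M_i$ in which the type of each $M_i$ is forced by $i$, $\varepsilon$ and the nature of $*$, and in which $\mathrm{size}(M_i)=d_i$. Whenever the forced type is a direct sum of copies of $\left(\begin{smallmatrix}0&1\\-1&0\end{smallmatrix}\right)$ the block has even size, so $d_i$ is even there. Tabulating the levels at which this skew type occurs then yields the three cases: none when $*$ is trivial and $\varepsilon=1$ (all blocks symmetric, hence no constraint); every level when $*$ is trivial and $\varepsilon=-1$; and exactly the levels with $(-1)^i=-\varepsilon$ when $*$ is nontrivial, since there the block $M_i$ is $\varepsilon(-1)^i=-1$-hermitian.

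The argument is essentially bookkeeping once Theorem \ref{lindo2} is in hand; the only genuine content is the pair of opposite facts packaged in that theorem, namely that a hermitian (or symmetric) block may have arbitrary size while a skew block in the relevant cases must have even size. The latter is the real obstruction, and it traces back to the existence of a symplectic basis (Proposition \ref{prop:sympbasis}, via Lemma \ref{lem:completenessimplessymplecticpairs}); the former is witnessed by the identity matrix. The main point to get right is therefore the parity bookkeeping that matches the condition $(-1)^i=-\varepsilon$ to precisely the skew blocks, which the computation $\varepsilon(-1)^i=-1$ makes transparent.
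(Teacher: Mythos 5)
Your argument is correct and is exactly the paper's: the authors prove this proposition by declaring it ``an immediate consequence of Theorem \ref{lindo2}'', and your write-up simply fills in the bookkeeping they leave implicit (the identification $d_i=\mathrm{size}(M_i)$ via Corollary \ref{cor:matrix}, the explicit construction for the ``if'' direction, and the equivalence of $\varepsilon(-1)^i=-1$ with $(-1)^i=-\varepsilon$ for the ``only if'' direction). No substantive difference.
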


The analogue of Theorem \ref{lindo2} in the case when $*$ is unramified and $A/\mathfrak r$ is assumed to be quadratically closed
is an immediate consequence of Proposition \ref{prop:C} and
Theorem \ref{thm:matricesovercompleteDVR}.

\begin{thm}\label{unra} Suppose that \( A \) is complete with quadratically closed residue field.
    Assume that \( * \) is unramified and fix $b\in S\cap U(A)$. Let $M\in M_m(A)$ be hermitian (resp. skew hermitian).
    Then $M$ is $*$-congruent to one and only one matrix of
        the form $\bigoplus_{i=0}^\infty y^i M_i$, where every $M_i$ of size $>0$ is equal to a diagonal
        matrix $\mathrm{diag}(1,\dots,1,-1,\dots,-1)$ (resp. $\mathrm{diag}(b,\dots,b,-b,\dots,-b)$).
\end{thm}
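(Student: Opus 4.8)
The plan is to deduce this as a refinement of Theorem \ref{thm:matricesovercompleteDVR} once the residue field is assumed quadratically closed. First I would invoke Theorem \ref{thm:matricesovercompleteDVR}(1)(a): since \( * \) is unramified, \( M \) is \( * \)-congruent to a block diagonal matrix \( \bigoplus_{i=0}^\infty y^i M_i \) in which every \( M_i \) of positive size is an invertible diagonal matrix. The key structural observation is a parity computation. Because \( y^* = -y \) we have \( (y^i)^* = (-1)^i y^i \), so if \( M \) is hermitian then each block \( y^i M_i \) is hermitian, which forces \( M_i \) to be hermitian when \( i \) is even and skew hermitian when \( i \) is odd; for skew hermitian \( M \) the two parities are interchanged. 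In particular the diagonal entries of \( M_i \) lie in \( R \) in the hermitian-block case and in \( S \) in the skew-hermitian-block case.

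For the existence of the normal form I would treat each block separately after reducing modulo \( \mathfrak r \). By Corollary \ref{cor:matrix} the reduced block \( \overline{M_i} \) is an invertible hermitian or skew hermitian matrix over the quadratically closed field \( A/\mathfrak r \). To the hermitian blocks I would apply Proposition \ref{prop:C}, obtaining a diagonal form \( \mathrm{diag}(1,\dots,1,-1,\dots,-1) \); to the skew hermitian blocks I would apply its skew hermitian analogue (the proposition proved immediately after Proposition \ref{prop:C}) with the fixed unit \( b \in S \cap U(A) \), obtaining \( \mathrm{diag}(b,\dots,b,-b,\dots,-b) \). Lifting these residue-field congruences back to \( A \) is precisely the reduction established in Theorem \ref{thm:matricesovercompleteDVR}, so each block of \( M \) may be replaced by its signature normal form, yielding a matrix of the asserted shape.

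Uniqueness I would derive from invariants already in hand. The block sizes are intrinsic, since \( \mathrm{rank}_A\, M_i = \dim_{A/\mathfrak r} W(i) \) by Corollary \ref{cor:matrix}. By Theorem \ref{thm:matricesovercompleteDVR}(2)(a), two such block diagonal matrices are \( * \)-congruent over \( A \) if and only if the corresponding reduced blocks are \( * \)-congruent over \( A/\mathfrak r \) for every \( i \). Over the quadratically closed residue field the congruence class of a nondegenerate hermitian (resp. skew hermitian) form is determined by its signature, by the moreover clauses of Proposition \ref{prop:C} and its skew analogue; hence the number of \( +1 \)'s and \( -1 \)'s (resp. \( +b \)'s and \( -b \)'s) in each block is a complete invariant, and this forces the normal form to be unique.

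The only real difficulty is careful parity bookkeeping. Since \( y^* = -y \), the type of the block \( M_i \) alternates with \( i \), so one must be scrupulous about applying Proposition \ref{prop:C} and its skew counterpart to the correct parities, and about the attendant switch between the scalars \( 1 \) and \( b \). Everything else—the passage from \( * \)-congruence over \( A \) to congruence over the residue field, the identification of the block sizes with \( \dim_{A/\mathfrak r} W(i) \), and the field-level signature uniqueness—is already supplied by the results established above, so no genuinely new argument is required beyond this organisation.
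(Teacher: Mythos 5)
Your proposal is correct and follows exactly the route the paper takes: the paper's entire proof of this theorem is the remark that it is an immediate consequence of Proposition \ref{prop:C} and Theorem \ref{thm:matricesovercompleteDVR}, and you have simply written out that deduction, including the uniqueness half via the signature invariance in Proposition \ref{prop:C} and part (2)(a) of Theorem \ref{thm:matricesovercompleteDVR}. However, the parity bookkeeping you flag at the end is more than bookkeeping. Since \( y^* = -y \), a hermitian \( M = \bigoplus_i y^i M_i \) forces \( M_i'^* = (-1)^i M_i \), so the odd-indexed blocks are skew hermitian and their normal form must be \( \mathrm{diag}(b,\dots,b,-b,\dots,-b) \) rather than \( \mathrm{diag}(1,\dots,1,-1,\dots,-1) \) (and dually for skew hermitian \( M \)). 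Read literally, the statement assigns a single block type to all \( i \), which is impossible unless the blocks of the wrong parity are empty, since otherwise the direct sum would fail to be \( \varepsilon \)-hermitian. What your argument actually proves is the parity-split version, in exact analogy with the explicit alternation written into Theorem \ref{lindo2}; that is the version that should be, and presumably was meant to be, stated here.
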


\begin{bibdiv}
    \begin{biblist}

\bib{MR1374198}{article}{
    author={Bayer-Fluckiger, Eva},
    author={Fainsilber, Laura},
    title={Non-unimodular Hermitian forms},
    journal={Invent. Math.},
    volume={123},
    date={1996},
    number={2},
    pages={233--240},
    issn={0020-9910},
    review={\MR{1374198}},
    doi={10.1007/s002220050024},
}

\bib{MR3245846}{article}{
    author={Bayer-Fluckiger, Eva},
    author={First, Uriya A.},
    author={Moldovan, Daniel A.},
    title={Hermitian categories, extension of scalars and systems of
    sesquilinear forms},
    journal={Pacific J. Math.},
    volume={270},
    date={2014},
    number={1},
    pages={1--26},
    issn={0030-8730},
    review={\MR{3245846}},
    doi={10.2140/pjm.2014.270.1},
}

\bib{MR3124208}{article}{
    author={Bayer-Fluckiger, Eva},
    author={Moldovan, Daniel Arnold},
    title={Sesquilinear forms over rings with involution},
    journal={J. Pure Appl. Algebra},
    volume={218},
    date={2014},
    number={3},
    pages={417--423},
    issn={0022-4049},
    review={\MR{3124208}},
    doi={10.1016/j.jpaa.2013.06.012},
}

\bib{MR2555069}{article}{
author={Cao, Yonglin},
author={Szechtman, Fernando},
title={Congruence of symmetric matrices over local rings},
journal={Linear Algebra Appl.},
volume={431},
date={2009},
number={9},
pages={1687--1690},
issn={0024-3795},
review={\MR{2555069}},
doi={10.1016/j.laa.2009.06.003},
}

\bib{C}{article}{
    author={Cohen, I.S.},
    title={On the structure and ideal theory of complete local rings},
    journal={Transactions of the American Mathematical Society},
    volume={59},
    date={1946},
    number={1},
    pages={54-106},
}

\bib{MR1960118}{article}{
    author={\Dbar okovi\'c, Dragomir \v Z.},
    author={Szechtman, Fernando},
    title={Solution of the congruence problem for arbitrary Hermitian and
    skew-Hermitian matrices over polynomial rings},
    journal={Math. Res. Lett.},
    volume={10},
    date={2003},
    number={1},
    pages={1--10},
    issn={1073-2780},
    review={\MR{1960118}},
    doi={10.4310/MRL.2003.v10.n1.a1},
}

\bib{MR0011073}{article}{
    author={Durfee, William H.},
    title={Congruence of quadratic forms over valuation rings},
    journal={Duke Math. J.},
    volume={11},
    date={1944},
    pages={687--697},
    issn={0012-7094},
    review={\MR{0011073}},
}

\bib{MR0150128}{article}{
    author={Jacobowitz, Ronald},
    title={Hermitian forms over local fields},
    journal={Amer. J. Math.},
    volume={84},
    date={1962},
    pages={441--465},
    issn={0002-9327},
    review={\MR{0150128}},
    doi={10.2307/2372982},
}

\bib{MR1096299}{book}{
    author={Knus, Max-Albert},
    title={Quadratic and Hermitian forms over rings},
    series={Grundlehren der Mathematischen Wissenschaften [Fundamental
    Principles of Mathematical Sciences]},
    volume={294},
    note={With a foreword by I. Bertuccioni},
    publisher={Springer-Verlag, Berlin},
    date={1991},
    pages={xii+524},
    isbn={3-540-52117-8},
    review={\MR{1096299}},
    doi={10.1007/978-3-642-75401-2},
}

\bib{L}{book}{
author={Lam, T.Y.},
title={Introduction to Quadratic Forms over Fields},
series={Graduate Studies in Mathematics},
volume={67},
pages={550},
publisher={American Mathematical Society, Providence, RI},
year={2005},
pages={xxii+550},
isbn={0-8218-1095-2},
}

\bib{MR783636}{book}{
    author={Lang, Serge},
    title={Algebra},
    edition={2},
    publisher={Addison-Wesley Publishing Company, Advanced Book Program,
    Reading, MA},
    date={1984},
    pages={xv+714},
    isbn={0-201-05487-6},
    review={\MR{783636}},
}

\bib{LO}{article}{
    author={Levchuk, V.},
    author={Starikova, O.},
    title={Quadratic forms of projective spaces over rings},
    journal={Sb. Math.},
volume={197},
date={2006},
pages={887-899},
}

\bib{LO2}{article}{
    author={Levchuk, V.},
    author={Starikova O.},
    title={A normal form and schemes of quadratic forms},
    journal={J. Math. Sci.},
volume={152},
date={2008},
pages={558-570},
}

\bib{MR0067163}{article}{
   author={O'Meara, O. T.},
   title={Quadratic forms over local fields},
   journal={Amer. J. Math.},
   volume={77},
   date={1955},
   pages={87--116},
   issn={0002-9327},
   review={\MR{0067163}},
   doi={10.2307/2372423},
}

    \end{biblist}
\end{bibdiv}

\end{document}